\newcommand{\N}{{\mathbb N}}
\newcommand{\Z}{{\mathbb Z}}
\newcommand{\R}{{\mathbb R}}
\newcommand{\C}{{\mathbb C}}
\renewcommand{\P}{{\mathbb P}}
\newcommand{\m}{\mathfrak{m}}
\newcommand{\V}{\mathcal{V}}
\newcommand{\lra}{\longrightarrow}
\newcommand{\ra}{\rightarrow}
\theoremstyle{plain}
\newtheorem{definition}{Definition}[section]
\newtheorem{remark}[definition]{Remark}
\newtheorem{example}[definition]{Example}
\newtheorem{theorem}[definition]{Theorem}
\newtheorem{corollary}[definition]{Corollary}
\newtheorem{conjecture}[definition]{Conjecture}
\newtheorem{proposition}[definition]{Proposition}
\newenvironment{proof}[1][]{\par\noindent{\emph{Proof#1: }}}{\hfill $\Box$\par\medskip}
\begin{document}

\title{Gorenstein-duality for one-dimensional almost complete intersections -- with an application to non-isolated real singularities}
\author{Duco van Straten and Thorsten Warmt}
\maketitle
\begin{abstract}
We give a generalization of the duality of a zero-dimensional
complete intersection to the case of one-dimensional almost complete intersections, which results in a {\em Gorenstein module} $M=I/J$. In the real case the 
resulting pairing has a signature, which we show to be constant under flat 
deformations. In the special case of a non-isolated real hypersurface
singularity $f$ with a one-dimensional critical locus, we relate 
the signature on the jacobian module $I/J_f$ to the Euler characteristic of 
the positive and negative Milnor fibre, generalising the result for isolated 
critical points. An application to real curves in $\P^2(\R)$ of even degree is given.
\end{abstract}


\section{Introduction}
The algebraic determination of the number of real roots of a polynomial has
a long history going back at least to Descartes. Of particular relevance 
are the methods of Sylvester and Hermite that determine the number of real 
roots as the {\em signature of an associated  quadratic form}. For a nice account 
of the classical approaches we refer to \cite{weber}.
 
In a similar spirit, the celebrated theorem of  Eisenbud-Levine \cite{eisenbudlevine}, and Khimshiashvilli \cite{khimshiashvilli} provides an algebraic method to determine the local degree of a finite map germ $F:(\R^n,0)\lra (\R^n,0)$
with component functions $f_1,f_2,\ldots,f_n \in P:=\R[[x_1,x_2,\ldots,x_n]]$. 
One considers the local $\R$-algebra
\[ {\cal A}_F:=P/(f_1,f_2,\ldots,f_n),\]
which has finite dimension precisely  when $f_1,f_2,\ldots,f_n$ form a 
regular sequence in $P$. Morever, in that case
${\cal A}_F$ is a {\em Gorenstein ring}: if we let
\[ h:=\left| \begin{array}{ccc} \partial_1f_1&\ldots&\partial_n f_1\\
\vdots& &\vdots\\\partial_1 f_n& \ldots&\partial_n f_n \\
\end{array} \right|\]
and choose any linear form $\phi: {\cal M} \lra \R$ with
$\phi(h) \neq 0$, then the pairing
\[B_{\phi}: {\cal A}_F \times {\cal A}_F \lra \R,\;(a,b) \mapsto \phi(a\cdot b)\]
is non-degenerate.

\begin{theorem} (Eisenbud-Levine \cite{eisenbudlevine}, Khimshiashvilli \cite{khimshiashvilli}) 
If $\phi(h) >0$ then
\[ Signature(B_{\phi})=Degree(F,0)\]
\end{theorem}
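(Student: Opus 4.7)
My plan is to follow the classical deformation argument. The first step is an independence observation: on the open set $\{\phi \in \mathcal{A}_F^* : \phi(h) \neq 0\}$ the map $\phi \mapsto \textup{Signature}(B_\phi)$ is integer-valued and locally constant, since $B_\phi$ remains non-degenerate. That set is the complement of a hyperplane in $\mathcal{A}_F^*$, so it has exactly two connected components $\{\phi(h) > 0\}$ and $\{\phi(h) < 0\}$, on which the signatures differ only by a sign. Thus it suffices to verify the formula for one convenient $\phi$ with $\phi(h) > 0$.

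Next I would deform $F$ to a generic real perturbation $F_t = F + tG$ so that for small $t > 0$ all zeros of $F_t$ in a small ball $U$ around $0$ are Morse (Jacobian $h_t$ non-vanishing at each). Standard topological degree theory then gives
\[
\textup{Degree}(F, 0) \;=\; \sum_{p \in F_t^{-1}(0) \cap U \cap \R^n} \textup{sign}\, h_t(p).
\]
Simultaneously, the algebras $\mathcal{A}_{F_t}$ fit into a flat family over the deformation parameter, the socle generator $h_t$ deforms flatly, and $\phi$ extends to a continuous family $\phi_t$ with $\phi_t(h_t) > 0$; applying the independence observation fiberwise gives $\textup{Signature}(B_\phi) = \textup{Signature}(B_{\phi_t})$.

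For small such $t$ the algebra splits orthogonally as $\mathcal{A}_{F_t} = \prod_p \mathcal{A}_{F_t, p}$, where each real zero contributes a factor $\R$ and each pair of complex-conjugate zeros contributes $\C$ regarded as a two-dimensional $\R$-algebra. A direct $2 \times 2$ computation in the basis $\{1, i\}$ shows the pairing on a $\C$-factor has determinant $-(\phi(1)^2 + \phi(i)^2) < 0$, hence signature $0$. On an $\R$-factor at a real zero $p$ the pairing is $(a,b) \mapsto \phi_p(1)\, ab$, with signature $\textup{sign}\, \phi_p(1)$. Using the freedom from the first step, I rechoose $\phi_t$ inside $\{\phi_t(h_t) > 0\}$ so that $\phi_{t,p}(1) = \textup{sign}\, h_t(p)$ at each real $p$ (this keeps $\phi_t(h_t) = \sum_p |h_t(p)| > 0$). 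The total signature then equals $\sum_{p \text{ real}} \textup{sign}\, h_t(p) = \textup{Degree}(F, 0)$.

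The main obstacle is the deformation invariance asserted in the second paragraph: one must check that the algebras $\mathcal{A}_{F_t}$ genuinely form a flat family in a semi-local neighborhood of $0$, passing carefully between the power series setting $\R[[x]]$ and polynomials in a small ball, and that $h_t$ together with $\phi_t$ can be extended continuously without degenerating, so that the bilinear forms $B_{\phi_t}$ vary in a continuous family whose signature cannot jump. Once this flat/continuous setup is in place, the decomposition and sign computation in the last paragraph are elementary.
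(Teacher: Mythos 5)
The paper does not actually prove this theorem; it quotes it from Eisenbud--Levine and Khimshiashvili, so there is no in-paper proof to compare with. Your deformation strategy is the standard modern route (and runs parallel to the paper's own deformation arguments in Section 6), but as written it contains one genuine gap, located at the step ``using the freedom from the first step, I rechoose $\phi_t$ inside $\{\phi_t(h_t)>0\}$.''

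The independence observation of your first paragraph is valid only for the \emph{local} algebra $\mathcal{A}_F$: there the socle is one-dimensional, $B_\phi$ is non-degenerate on the entire complement of the hyperplane $\{\phi(h)=0\}$, and that complement has exactly two components. For the deformed algebra $\mathcal{A}_{F_t}\cong\prod_p\mathcal{A}_{F_t,p}$ this fails: $B_{\phi}$ is non-degenerate iff $\phi_p(h_{t,p})\neq 0$ for \emph{every} local factor, so the non-degenerate locus has $2^{r}$ components ($r$ the number of real zeros), several of which meet $\{\phi(h_t)>0\}$ and carry different signatures. Concretely, for two real Morse points with $h_t(p_1)=1$ and $h_t(p_2)=2$, the functionals $\phi=(1,1)$ and $\phi'=(3,-1)$ are both non-degenerate with $\phi(h_t)>0$, yet have signatures $2$ and $0$. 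Hence your rechoice of $\phi_t$ can change the signature, and your final identity $\mathrm{Signature}(B_{\phi_t})=\sum_p\mathrm{sign}\,h_t(p)$ is established only for the rechosen functional, not for the continuation of the original $\phi$. Identifying \emph{which} component that continuation lands in is essentially the content of the theorem, and cannot be read off from $\mathrm{sign}(\phi_t(h_t))$ alone. The classical repair is to take $\phi$ to be the Grothendieck residue (equivalently, the trace form of Scheja--Storch, which is why the paper records their socle-generator result): one has $\mathrm{res}(h)=\dim_{\R}\mathcal{A}_F>0$, so it is an admissible choice at $t=0$, and it localizes as $\mathrm{res}=\sum_p\mathrm{res}_p$ with $\mathrm{res}_p(h_{t,p})=\dim_{\R}\mathcal{A}_{F_t,p}>0$, so every local sign is $+1$ and the computation in your last paragraph goes through. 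The flatness and power-series-versus-representative issues you flag are real but routine by comparison; this sign-tracking point is the one that actually needs a new idea.
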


The theorem is a result of key importance and has been the starting point of many subsequent works. We mention a few of the applications and generalisations.

Consider an isolated complete intersection curve $C=f^{-1}(0)$, where
$f:(\R^n,0) \lra (\R^{n-1},0)$. According to Aoki, Fukuda and Nishimura 
\cite{aokifukudanishimura}, one can compute the {\em number of real branches of
$C$} as follows: consider $g:=x_1^2+x_2^2+\ldots+x_n^2$ and let
\[Jac(g):=\left| \begin{array}{ccc} \partial_1 g&\ldots&\partial_n g\\
\partial_1 f_1&\ldots&\partial_nf_1\\\vdots&&\vdots\\\partial_1f_{n-1}&\ldots&\partial_nf_{n-1}\\ \end{array} \right|\]
On the $\R$-algebra ${\cal A}_{f,Jac(g)}:=P/(f_1,\ldots,f_{n-1},Jac(g))$
one defines as above a pairing $B_{\phi}$.

\begin{theorem} (Aoki-Fukuda-Nishimura)
\[ Signature(B_{\phi})=\textup{Number of real branches of}\;\; C\]
\end{theorem}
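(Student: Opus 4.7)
The plan is to reduce the statement to the preceding Eisenbud--Levine--Khimshiashvili theorem applied to the map germ
\[ F := (f_1,\ldots,f_{n-1},Jac(g)) : (\R^n,0) \lra (\R^n,0), \]
whose local algebra is precisely $\mathcal{A}_{f,Jac(g)}$. Once one knows that $F$ is a finite map germ, that theorem identifies $Signature(B_{\phi})$ with $Degree(F,0)$, and all remaining work consists in computing this local degree in terms of the geometry of $C$.

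First I would verify finiteness of $F$ at the origin, i.e.\ that $f_1,\ldots,f_{n-1},Jac(g)$ is a regular sequence in $P$. Geometrically, $V(Jac(g))\cap C$ is the set of points where $\nabla g$ lies in the conormal space of $C$, i.e.\ the critical locus of $g|_C$. On the real part of $C$ this consists only of the origin, since $g=\sum x_i^2>0$ elsewhere and is radially proper; on the remaining (complex-conjugate pairs of) branches the isolated complete intersection hypothesis on $C$ together with the quadratic nature of $g$ force the critical set to be zero-dimensional near $0$ as well.

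Next I would compute $Degree(F,0)$ by a small generic deformation. Perturbing $F$ to
\[ F_t := (f_1-t_1,\ldots,f_{n-1}-t_{n-1},Jac(g)),\qquad t\in\R^{n-1}\ \text{small and generic}, \]
the real zeros of $F_t$ inside a small ball $B$ around the origin are precisely the critical points of $g$ restricted to the smooth real Milnor fibre $C_t:=\{f_i=t_i\}$. Each real branch of $C$ deforms to a real component of $C_t$ passing close to $0$; on such a component the proper function $g|_{C_t}$ attains a non-degenerate minimum at an interior point of $B$, contributing one real zero of $F_t$. Each complex-conjugate pair of branches of $C$ deforms to a conjugate pair of components of $C_{t,\C}$ which stay uniformly away from $\R^n$ for small $t$, and therefore contributes no real zeros. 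Thus the local degree is a signed count of such minima:
\[ Degree(F,0) = \sum_{p \in F_t^{-1}(0)\cap B} \textup{sign}(\det DF_t(p)). \]

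The main obstacle is the sign computation at these minima: one needs $\det DF_t(p)>0$ so that every real branch of $C$ contributes exactly $+1$ to the degree. This should come from a direct calculation showing that $\det DF_t(p)$ factors as the Hessian of $g|_{C_t}$ at $p$ times the squared Gram determinant of the normal frame $\{\nabla f_i(p)\}$; the Gram determinant is manifestly positive, and the Hessian is positive at a non-degenerate minimum. Carrying out this identification cleanly in local parametrisations of $C_t$ is the delicate step, but once it is done one obtains that $Degree(F,0)$ is exactly the number of real branches of $C$, and the Aoki--Fukuda--Nishimura theorem follows from Eisenbud--Levine--Khimshiashvili.
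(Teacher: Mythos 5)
This statement is quoted in the paper's introduction as a known result of Aoki--Fukuda--Nishimura; the paper gives no proof of it, so there is no internal argument to compare yours with. Your overall strategy --- apply the Eisenbud--Levine--Khimshiashvili theorem to $F=(f_1,\ldots,f_{n-1},Jac(g))$ and compute $Degree(F,0)$ by counting critical points of $g$ on the smoothed real curve $C_t$ --- is indeed the standard route to this theorem, so the plan is sound. The execution, however, has a genuine gap at precisely the point you flag as ``the delicate step'', and the conclusion you announce there is not the right one.

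Your factorisation of the Jacobian is correct: at a nondegenerate critical point $p$ of $g|_{C_t}$, with $\gamma$ a unit-speed parametrisation of $C_t$ near $p$ and with $Jac(g)$ placed \emph{first} (matching the row order $\nabla g,\nabla f_1,\ldots,\nabla f_{n-1}$ in its defining determinant), one finds
\[
\det DF_t(p)=\Bigl((g\circ\gamma)''(0)\Bigr)\cdot\det\bigl(\mathrm{Gram}(\nabla f_1(p),\ldots,\nabla f_{n-1}(p))\bigr).
\]
But this shows that $\mathrm{sign}\det DF_t(p)$ equals the sign of the second derivative of $g$ along the curve: it is $+1$ at local minima and $-1$ at local maxima. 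So it is false that $\det DF_t(p)>0$ at every zero of $F_t$; were that so, the degree would equal the \emph{total} number of critical points of $g|_{C_t}$, which generically exceeds the number of real branches (an arc of $C_t\cap B$ can carry several critical points, and closed components carry at least two). The missing step is elementary Morse theory on the compact $1$-manifold $C_t\cap B$: on each arc with endpoints on $\partial B$, where $g=\epsilon^2$ is maximal, one has $\#\min-\#\max=1$; on each circle, $\#\min-\#\max=0$; and the number of arcs equals the number of real branches of $C$ because $\partial(C_t\cap B)$ consists of the $2r$ points perturbing $C\cap S_\epsilon$. Three further points are asserted rather than proved: (i) the ordering is not innocent --- moving $Jac(g)$ to the last slot multiplies the Jacobian $h$ of the system, hence the normalisation $\phi(h)>0$, by $(-1)^{n-1}$; (ii) that $(t,0)$ is a regular value of $F$ for generic small $t$ (equivalently that $g|_{C_t}$ is Morse) does not follow from Sard's theorem applied to $F$, since the admissible values are constrained to the hyperplane $\{y_n=0\}$, and needs a separate transversality argument; (iii) your finiteness argument is only heuristic and can actually fail: for $n=2$ and $f_1=x^2+y^2$ the form $g$ is isotropic on the complex curve and $Jac(g)$ vanishes identically, so the sequence is not regular --- finiteness has to be assumed (it is already needed for $B_\phi$ to be defined), not derived from the ICIS hypothesis.
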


This result was further generalised to the case of
arbitrary Gorenstein curve singularities $C$ in
\cite{jamesduco}.

In a similar vein, the work \cite{sza1} associates to a polynomial mapping 
$f:\R^n \lra \R^k$ an $\R$-algebra ${\cal A}$ with a quadratic form $B_{\phi}$
such that
\[ Signature(B_{\phi})=\chi(f^{-1}(0)),\]
and further variations can be found in \cite{sza2}, \cite{sza3}, \cite{dutertre}.\\

Another type of application is to the topology of real hypersurface singularities. A function $f \in P$ defines an isolated hypersurface singularity precisely
when the partial derivatives $f_i:=\partial f/\partial x_i$ form a regular
sequence in $P$. In this case the algebra ${\cal A}$ is nothing but the (real) 
Milnor algebra $P/J_f$, where $J_f$ is the jacobian ideal of $f$. The degree 
of $F:=(\partial_1f,\ldots,\partial_nf)$ is the Poincar\'e-Hopf index of the 
gradient vector field of $f$.

The Milnor ring is the most important algebraic invariant of the singularity
$f$. Its dimension $\mu(f):=\dim(P/J_f)$ is the Milnor number, which equals the
dimension of the cohomology $H^{n-1}(F)$ of the Milnor fibre $F$, \cite{agv}, 
\cite{milnor}.
In the real case one can also consider the real Milnor fibres:
\[ F_{\epsilon,\eta}:=\{ x \in \R^n \;|\; \|x\| \le \epsilon, f(x)=\eta\}, \;\; 0 < \epsilon \ll 1,\;\; 0 < \eta \ll \epsilon\]
For $\eta >0$, we put $F_{+}:=F_{\epsilon,\eta}$ and $F_{-}:=F_{\epsilon,-\eta}$
and call them the {\em positive} and {\em negative} Milnor fibers of $f$.

\begin{theorem} (\cite{arnold}) Let $f \in \R\{x_1,x_2,\ldots,x_n\}$
have an isolated critical point at the origin. Then: 
\[Signature(f)=-\widetilde{\chi}(F_{-})=(-1)^{n-1}\widetilde{\chi}(F_{+}),\] 
where $\widetilde{\chi}$ denotes the reduced Euler characteristic.
\end{theorem}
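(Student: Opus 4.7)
The strategy combines the Eisenbud--Levine--Khimshiashvili theorem with a Morse-theoretic analysis of a Morsification of $f$, together with the conic structure of an isolated real hypersurface singularity.

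\textbf{Step 1: Signature as a Morse sum.} Since $f$ has an isolated critical point, the partials $f_i = \partial_i f$ form a regular sequence in $P$, so Theorem~1.1 applies to $\nabla f : (\R^n,0)\to(\R^n,0)$ with the Hessian $\det(\partial_i\partial_j f)$ in the role of $h$, giving $\sigma := \mathrm{Signature}(f) = \mathrm{Degree}(\nabla f, 0)$. Choose a Morsification $\tilde f$ of $f$, a small perturbation that is Morse in $B_\epsilon$ and coincides with $f$ outside a smaller concentric ball; its critical points $p_1,\ldots,p_\mu \in B_\epsilon$ have Morse indices $k_i$ and critical values $c_i$ close to $0$. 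Homotopy invariance of the gradient degree together with the local formula $\mathrm{Degree}(\nabla\tilde f, p_i) = (-1)^{k_i}$ then gives $\sigma = \sum_i (-1)^{k_i}$.

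\textbf{Step 2: Morse tube and conic structure.} Take $\eta > 0$ so small that all $c_i \in (-\eta,+\eta)$ and that $\pm\eta$ are regular values of $\tilde f|_{S_\epsilon}$. The tube $W := \tilde f^{-1}([-\eta,+\eta])\cap\bar B_\epsilon$ is a manifold with corners whose bottom and top faces are the real Milnor fibres $F_-$ and $F_+$. By standard Morse theory, $W$ is obtained from the collar $F_-\times [0,1]$ by attaching one $k_i$-handle per critical point, and applying the same reasoning to $-\tilde f$ (whose Morse indices are $n-k_i$) yields the two expressions
\[ \chi(W) \;=\; \chi(F_-) + \sigma \;=\; \chi(F_+) + (-1)^n\sigma. \]
The conic structure theorem for real analytic hypersurfaces with isolated singularity states that, for $\epsilon$ small, the singular fibre $V = f^{-1}(0)\cap\bar B_\epsilon$ is homeomorphic to the cone on the link $L$, hence contractible; the neighbourhood $W(f) = f^{-1}([-\eta,+\eta]) \cap \bar B_\epsilon$ deformation retracts onto $V$ via the normalised gradient flow of $f$, controlled by the Lojasiewicz gradient inequality, and for $\tilde f$ close to $f$ the tube $W$ is isotopic to $W(f)$. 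Therefore $\chi(W) = 1$, and the system above gives $\chi(F_-) = 1-\sigma$ and $\chi(F_+) = 1-(-1)^n\sigma$, equivalently
\[ \widetilde\chi(F_-) = -\sigma, \qquad \widetilde\chi(F_+) = (-1)^{n-1}\sigma, \]
which is the claim.

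\textbf{Main obstacle.} The technical heart lies in the homotopy retraction $W \simeq V$: although $V$ is contractible by the conic structure theorem, the gradient flow of $f$ is singular at the origin, so the deformation requires the Lojasiewicz inequality (or the curve selection lemma) to guarantee that trajectories reach $V$ in finite time. A secondary point is ensuring that the Morse handle count is unaffected by the behaviour of $\tilde f|_{S_\epsilon}$; this is arranged by insisting that the Morsification coincide with $f$ outside a smaller concentric ball, so that all new Morse data is strictly interior and the level-preserving isotopy $\tilde f_s = (1-s)f + s\tilde f$ is controlled by the Thom--Mather isotopy theorem.
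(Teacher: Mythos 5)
Your argument is correct: the reduction of the signature to $\sum_i(-1)^{k_i}$ via Eisenbud--Levine plus a Morsification, the two handle-decomposition counts $\chi(W)=\chi(F_-)+\sigma=\chi(F_+)+(-1)^n\sigma$, and the identification $\chi(W)=1$ from the conic structure of the central fibre together yield exactly the stated identities, and this is essentially the classical Khimshiashvili--Arnold proof. The paper itself offers no proof of this statement --- it is quoted as background with a citation to \cite{arnold} --- so there is nothing to compare beyond noting that your route is the standard one; the only points deserving care (the retraction of the tube onto the singular fibre and the control of the Morsification near the boundary sphere) are precisely the ones you flag, and both are handled by standard results.
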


Although the positive and negative Milnor fibre have, in general,
a quite different topology, it is a simple but remarkable fact that their
reduced Euler characteristics are the same, up to a sign.
The real $D_4$ surface singularity defined by $x(x^2-y^2)+z^2=0$ may
exemplify this.
\begin{center}

\begin{tabular}{ccc}
\epsfig{figure=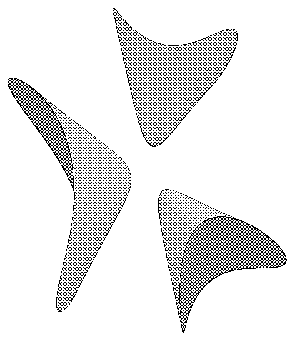}&\epsfig{figure=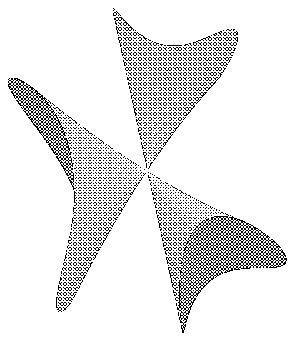}&\epsfig{figure=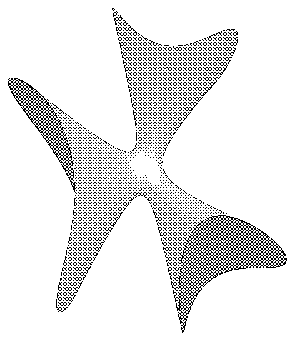}\\
negative Milnor fibre&$D_4$ singularity&positive Milnor fibre\\
\end{tabular}

\end{center}

In this paper we give a partial generalisation of this result to
the case where the sequence $f_1,f_2,\ldots,f_n$ defines a {\em one-dimensional
locus}. In this case one says the the ideal $J=(f_1,\ldots,f_n)$ defines
an {\em almost complete intersection}. We show that the torsion sub-module
$M=I/J$ of $P/J$ is a {\em Gorenstein module}. 
An isomorphism  $\phi: P \stackrel{\cong}{\lra} \Omega^n_P$ 
determines a natural pairing 
\[B_{\phi}: I/J \times I/J \lra \R\] 
Its signature is an invariant of the real topology of the situation. 

In the case of partial derivatives  $f_i=\partial f/ \partial x_i$ of a
function $f$ with a one-dimensional critical locus, such modules $I/J_f$
were also considered by Ruud Pellikaan \cite{pellikaan1} under the name 
of {\em jacobian module}. The case where $I$ is a radical ideal defining
a reduced curve, corresponds to $f$ having {\em transverse type $A_1$}.
We proof the following theorem

\begin{theorem} Assume that $f \in \R\{x_1,x_2,\ldots,x_n\}$ has one-dimensional
critical locus and transverse type $A_1$. Assume furthermore that either
$f$ has a morsification or that $n=3$. Then
 
\[2 Signature(B_{\phi})=-\widetilde{\chi}(F_{-})+(-1)^{n-1}\tilde{\chi}(F_{+})\]
wher as before $F_{\pm}$ are the positive and negative Milnor fibres of 
$f$ and $\widetilde{\chi}$ denotes the reduced Euler characteristic.
\end{theorem}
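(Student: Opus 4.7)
The plan is to reduce the identity to a sum of pointwise contributions coming from isolated non-degenerate critical points, using a morsification on the algebraic side and Morse theory on the topological side. First I would check the formula at a single Morse critical point $p$ of Morse index $q$: by Eisenbud--Levine--Khimshiashvili the local signature equals $(-1)^{q}$; the local real Milnor fibres retract onto spheres, $F_{+}^{\mathrm{loc}} \simeq S^{n-q-1}$ and $F_{-}^{\mathrm{loc}} \simeq S^{q-1}$ (with $S^{-1}$ read as the empty set and $\widetilde{\chi}(\emptyset)=-1$), and a one-line computation gives
\[
-\widetilde{\chi}(F_{-}^{\mathrm{loc}})+(-1)^{n-1}\widetilde{\chi}(F_{+}^{\mathrm{loc}}) = -(-1)^{q-1} + (-1)^{n-1}(-1)^{n-q-1} = 2(-1)^{q},
\]
so the identity holds locally with the predicted factor of two.

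Next, assuming a morsification $\widetilde{f} = f + tg$ of $f$ exists, I would argue the two sides of the identity separately. On the algebraic side, the signature of $B_\phi$ on $I/J_f$ is made to equal the sum of Eisenbud--Levine signatures at the real Morse critical points of $\widetilde{f}$ by appealing to the flat-deformation invariance of the signature announced in the abstract; complex-conjugate pairs of Morse points cancel. On the topological side, $\widetilde{f}$ is a small $C^{\infty}$ perturbation of $f$, hence the Milnor fibres $F_{\pm}$ of $f$ are diffeomorphic to those of $\widetilde{f}$, and a Morse-theoretic analysis of the level sets of $\widetilde{f}$ inside $B_{\epsilon}$ expresses $\widetilde{\chi}(F_{\pm})$ as the sum over real Morse points of $\widetilde{\chi}(F_{\pm}^{\mathrm{loc}})$. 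Combining this with the pointwise identity of the previous step yields the theorem whenever a morsification is available.

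For the case $n=3$, where a morsification need not exist, I would work directly with surfaces in $\R^{3}$. Here $\widetilde{\chi}(F_{\pm})$ can be computed from the reduced critical curve, together with the transverse $A_1$-fibres and additional contributions at its special points, and these contributions match the decomposition of the jacobian module $I/J_f$ studied by Pellikaan in this range of dimensions. The point is that in dimension three both sides of the identity can be read off from the combinatorial structure of a generically plane curve with a few characteristic singularities, and the signature contributions on the algebraic side can be cross-checked against Euler characteristics of pieces of a real surface.

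The main obstacle is the algebraic reduction to Morse points: the naive family $t \mapsto I_t/J_{f_t}$ is not flat because the critical locus jumps from one-dimensional at $t=0$ to zero-dimensional at $t\neq 0$, so the flat-deformation invariance cannot be applied verbatim to the Gorenstein module itself. The proper setting is to embed the Gorenstein pairing into a larger, genuinely flat family---for instance one built out of the algebras $P/J_{f_t}$---and to show that the Gorenstein signature at $t=0$ coincides with the sum of Eisenbud--Levine signatures at $t\neq 0$. Making this deformation-theoretic bookkeeping precise, and aligning it with the Morse-theoretic accounting of Euler characteristics, is the technical heart of the proof.
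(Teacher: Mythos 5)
Your local computation at a Morse point is correct, but the global reduction rests on a misreading of what ``morsification'' means here, and the gap you flag at the end is fatal to the route you propose. In this paper a morsification is an \emph{admissible} deformation in the sense of Pellikaan and de Jong--van Straten: the critical curve $\Sigma$ is deformed flatly to a smooth curve $\Xi_s$ that stays inside the critical locus of $F_s$, and for $s\neq 0$ the singularities of $F_s$ are of type $A_1$, $A_{\infty}$ and $D_{\infty}$ --- the critical locus remains one-dimensional. It is precisely because this one-dimensionality is preserved that the relative module ${\cal M}_S={\cal I}_S/{\cal J}_S$ is flat over $S$ (Proposition \ref{flat}), the pairing extends to a family (Theorem \ref{pairingfamily}), and the signature is constant (Theorem \ref{signatureconstant}). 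The proof of Theorem \ref{top} then reduces both sides to the local models $A_1$, $A_{\infty}$, $D_{\infty}$ in their various real forms (complex-conjugate pairs cancelling), together with a cut-and-paste argument for the Euler characteristics. Your plan instead perturbs $f$ to a function with only isolated Morse points; as you yourself observe, the family $I_t/J_{f_t}$ is then not flat, and the proposed fix --- embedding the pairing into some flat family built from the algebras $P/J_{f_t}$ --- is not carried out and is not available, since $\dim P/J_{f_t}$ jumps from infinite at $t=0$ to finite at $t\neq 0$. No argument is given that the Gorenstein signature of $I/J_f$ equals the sum of Eisenbud--Levine signatures at the Morse points of such a perturbation; that is the missing idea, not a bookkeeping issue. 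Moreover, even in the correct setting the local models one must verify are not only Morse points: one needs the identity for the real forms of $D_{\infty}$ (e.g.\ $\pm x_1x_2^2\pm x_3^2\pm\cdots$) and the vanishing of the contribution of $A_{\infty}$ points, none of which appears in your proposal.

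For $n=3$ your sketch is also not the paper's argument and is too vague to stand on its own. The paper takes a generic $g\in I^2$ (which does admit a morsification, because reduced space curves are smoothable), establishes the formula for $g$, and then joins $f$ to $g$ by the admissible deformation $F=tf+(1-t)g$ in which $\Sigma$ is deformed trivially; by constancy of the signature, $\sigma(g)=\sigma(f)+\sum_i\sigma(f_i,p_i)$, where only $A_1$ and $D_{\infty}$ germs split off, and the known local statements then transfer the result to $f$. Reading both sides ``off the combinatorial structure of a plane curve'' does not substitute for this interpolation and does not engage with the essential difficulty that $f$ itself may admit no morsification.
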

Note that contrary to case of isolated singularities, the two Euler
characteristics appearing at the right hand side are in general no 
longer equal up to a sign.
It appears that the above statement has a much broader range of validity,
and holds for many other transverse types of singularities. However, as stated,
 it is not true in full generality, but we were not able to identify the 
precise limits of its validity.\\ 

A nice application arises in the situation of a homogeneous polynomial 
$f \in \R[x,y,z]$ of even degree. It defines a curve $C =\{f=0\} \subset \P^2(\R)$, whose complement consists of a part $V_{+}$ where $f>0$ and a part
$V_{-}$ where $f<0$.

\begin{theorem} If $f \in \R[x,y,z]$ defines a curve with only ordinary
double points, then
\[Signature(B_{\phi}(f))=\chi(V_+)-\chi(V_{-})\]
\end{theorem}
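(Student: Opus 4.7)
The plan is to reduce the claim to Theorem~1.4 applied with $n=3$, and then reinterpret the right-hand side topologically by exploiting the homogeneity of $f$.

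First I would verify the hypotheses of Theorem~1.4. Since $f$ is homogeneous of degree $d$, its critical locus $\{f_x=f_y=f_z=0\}\subset\R^3$ is the affine cone over the singular set of $C\subset\P^2(\R)$. Each ordinary node of $C$ contributes a line through the origin, so the critical locus is one-dimensional, and along each such line the transverse singularity of $f$ is of type $A_1$ because the corresponding node of $C$ is an ordinary double point. With $n=3$ no morsification assumption is needed, so Theorem~1.4 applies and yields
\[2\,\textup{Signature}(B_\phi)=-\widetilde\chi(F_-)+\widetilde\chi(F_+).\]
(If $C$ happens to be smooth, $f$ has an isolated critical point at the origin and the same formula follows directly from Arnold's Theorem~1.3.)

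Next I would identify the real Milnor fibres with pieces of the unit sphere. Because $f$ is homogeneous, $f:\R^3\setminus\{0\}\to\R\setminus\{0\}$ is a fibre bundle: for $0<\eta\ll\epsilon^d$ the rescaling $x\mapsto\eta^{-1/d}x$ sends $F_+=\{f=\eta,\|x\|\le\epsilon\}$ diffeomorphically onto a closed sublevel set of $\{f=1\}$, and the radial projection $x\mapsto x/\|x\|$ identifies $\{f=1\}$ with $U_+:=\{y\in S^2:f(y)>0\}$; chaining these gives a homotopy equivalence $F_+\simeq U_+$, and analogously $F_-\simeq U_-$. Because $d$ is even, $f(-x)=f(x)$, so the antipodal involution of $S^2$ preserves $U_\pm$ and acts freely; the canonical covering $S^2\to\P^2(\R)$ therefore restricts to an unramified double cover $U_\pm\to V_\pm$, giving $\chi(U_\pm)=2\chi(V_\pm)$.

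Substituting into the displayed formula and using $\widetilde\chi=\chi-1$,
\[2\,\textup{Signature}(B_\phi)=-(2\chi(V_-)-1)+(2\chi(V_+)-1)=2\bigl(\chi(V_+)-\chi(V_-)\bigr),\]
which is the desired identity. The only slightly non-routine step I expect is the homotopy equivalence $F_\pm\simeq U_\pm$; it is standard for homogeneous polynomials but requires a brief comparison between the ball-cut level set and its radial projection to the sphere. Everything else reduces to direct bookkeeping once Theorem~1.4 is in hand.
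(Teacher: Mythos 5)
Your proposal is correct and follows essentially the same route as the paper: reduce to the three-variable signature theorem (Theorem \ref{threespace}) and observe that the canonical projection $\R^3\setminus\{0\}\to\P^2(\R)$ restricts to an unramified $2{:}1$ covering of the Milnor fibres onto (deformation retracts of) $V_\pm$, giving $\chi(F_\pm)=2\chi(V_\pm)$. Your rescaling/radial-projection argument just spells out the covering step that the paper asserts directly.
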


The structure of the paper is as follows. After reviewing the duality
in the complete intersection case, we explain the emergence of
the Gorenstein module $I/J$ for one-dimensional almost complete
intersections and explain how the pairing behaves in a relative situation.
Then we revies some material about singularities with one-dimensional
singular locus that provide a rich source of examples and explain for special
situations the meaning of the resulting signature in the real case,

\section{Duality for zero-dimensional complete intersections}

Let $P=K[[x_1,x_2,\ldots,x_n]]$ or any other regular local $K$-algebra
of dimension $n$. Let there be given a sequence of elements 
\[ f_1,\ldots,f_n \in P\]
and let $J$ be ideal generated by them.
We denote the {\em Koszul-complex}  associated to the 
sequence ${\bf f}:=f_1,f_2,\ldots,f_n$ 
by
\[K_{\bullet}({\bf f}):=0 \rightarrow K_n \rightarrow K_{n-1} \rightarrow
 \cdots \rightarrow K_2 \rightarrow K_1 \rightarrow K_0 \rightarrow 0 \]
Its terms are $K_p:= \bigwedge^p P^n$, so that $K_0=P$.
 The differentials are induced by 
sending the $i$-th basis vector $e_i$ of $P^n$ to $f_i \in P$.
We denote its homology groups by
\[H_k({\bf f}):=H_k(K_{\bullet}({\bf f}))\]  
The following result is well-known (see e.g. \cite{brunsherzog}, thm. 1.6.17 and thm. 2.1.2).

\begin{proposition}\label{vanishing} Let $V(J) \subset Spec(P)$ the variety defined by the ideal $J$ and let $d:=dim(V(J))$. Then  $H_k({\bf f})=0$ for $k >d$ and  $H_d({\bf f}) \neq 0$ 
\end{proposition}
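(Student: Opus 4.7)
The plan is to derive both assertions from two standard results about the interplay between grade and Koszul homology, both contained in the Bruns--Herzog references already cited in the statement. The first is the grade-sensitivity of Koszul homology: for any sequence $\mathbf{f} = f_1,\ldots,f_n$ in a Noetherian local ring $P$ generating an ideal $J$, one has
\[
\max\{k \;:\; H_k(\mathbf{f}) \neq 0\} \;=\; n - \textup{grade}(J, P),
\]
where $\textup{grade}(J,P)$ is the length of a maximal $P$-regular sequence contained in $J$. The second is that in a Cohen--Macaulay local ring the grade of any proper ideal equals its height, and moreover $\textup{height}(J) + \dim(P/J) = \dim(P)$.

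Given these inputs the argument is essentially bookkeeping. Since $P$ is a regular local ring of dimension $n$ it is Cohen--Macaulay, so $\textup{grade}(J,P) = \textup{height}(J) = n - d$. Substituting into the grade--Koszul formula gives $\max\{k : H_k(\mathbf{f}) \neq 0\} = n - (n - d) = d$, which is exactly both halves of the proposition in a single blow.

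The main obstacle, were the two cited results not available, lies in establishing the grade--Koszul formula. I would proceed by induction on $\textup{grade}(J,P)$. The base case $\textup{grade}(J,P) = 0$ rests on the identification $H_n(\mathbf{f}) \cong \textup{Ann}_P(J)$ (which follows directly from the description of the top Koszul differential) together with the standard fact that an ideal consisting of zero divisors is contained in some associated prime, and hence has nonzero annihilator. For the inductive step one picks a $P$-regular element $g \in J$ and uses the long exact sequence in Koszul homology induced by multiplication by $g$ on $K_\bullet(\mathbf{f})$ to transfer the statement to $P/(g)$, where both the grade of the image of $J$ and the codimension of its vanishing locus have dropped by one. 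The delicate point is keeping the inductive index shift straight; the complementary identity $\textup{grade} = \textup{height}$ in $P$ is comparatively soft, being essentially the unmixedness property of Cohen--Macaulay local rings.
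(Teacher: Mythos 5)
Your proposal is correct and follows exactly the route the paper intends: the paper gives no proof but cites Bruns--Herzog Theorem 1.6.17 (grade-sensitivity of Koszul homology) and Theorem 2.1.2 (grade equals height in a Cohen--Macaulay ring, together with $\textup{height}(J)+\dim(P/J)=\dim P$), which are precisely the two inputs you combine. The only implicit hypothesis worth noting is that $J$ is a proper ideal, so that the grade-sensitivity formula applies.
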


Let us first look at the case where $\dim(J)=0$. Then the above
result tells us that $H_0({\bf f})=P/J$ and $H_k({\bf f})=0$ for $k>0$. 
So ${\bf f}$ is a regular sequence and  the Koszul complex provides a free 
resolution of $P/J$ as a $P$-module.
As the transpose of the map $ K_n \lra K_{n-1}$ can be identified, up to
signs, with the map $K_1 \lra K_0$, we obtain an isomorphism
\[ Ext^n_P(P/J,P) \cong P/J\]

Recall Grothendiecks {\em local duality theorem} (see e.g. \cite{brunsherzog}, thm. 3.5.8):

\begin{theorem}\label{localduality} 
Let $P$ be a $n$-dimensional local ring with maximal ideal ${\m}$
and dualizing module $\omega_P$. For any finitely generated $P$-module $M$,  
there exists a natural non-degenerate pairing
\[H^i_{\m}(M) \times Ext^{n-i}_P(M,\omega_P) \lra H^n_{\m}(\omega_P) \stackrel{tr}{\lra} K \]
\end{theorem}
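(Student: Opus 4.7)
\emph{Proof proposal.} My plan is to reduce to the case where $P$ is a complete regular local ring and then realise the asserted pairing as a concrete manifestation of Matlis duality, with the trace map $H^n_{\m}(\omega_P)\lra K$ playing the role of evaluation into the injective hull of the residue field.

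The first step is to replace $P$ by its $\m$-adic completion. Both local cohomology $H^i_\m(-)$ and $\Ext^j_P(-,\omega_P)$ commute with the faithfully flat base change $P\ra\hat P$, and the formation of the dualizing module is compatible with completion, so no information is lost. In the complete case Cohen's structure theorem presents $P$ as a quotient $R/\mathfrak{a}$ of a complete regular local ring $(R,\mathfrak{n})$ of dimension $n+c$. Using independence of base for local cohomology, i.e. $H^i_\m(M)=H^i_{\mathfrak{n}}(M)$ for every finitely generated $P$-module $M$, together with the change-of-rings identification $\Ext^{n-i}_P(M,\omega_P)\cong\Ext^{(n+c)-i}_R(M,R)$ that rests on $\omega_P=\Ext^c_R(P,R)$, one reduces the whole statement to the regular case, where $\omega_P=P$ and $n$ is the Krull dimension.

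In this reduced setting the core computation is the identification $H^n_\m(P)\cong E$, the injective hull of the residue field. I would obtain this by computing $H^n_\m$ via the \v{C}ech, i.e.\ stable Koszul, complex attached to a regular system of parameters $x_1,\ldots,x_n$: the top cohomology is the direct limit of $P/(x_1^t,\ldots,x_n^t)$ with transition maps given by multiplication by $x_1\cdots x_n$, and in the power-series setting this is directly recognised as $E$. Composing with the socle projection $E\lra K$ produces the trace. For a finitely generated $M$, the Yoneda pairing
\[ H^i_\m(M)\times\Ext^{n-i}_P(M,\omega_P)\lra H^n_\m(\omega_P)\stackrel{tr}{\lra}K \]
then defines the pairing whose naturality and nondegeneracy still need to be checked.

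The main obstacle is precisely the nondegeneracy. The cleanest route I see is to show that both $M\mapsto H^i_\m(M)$ and $M\mapsto\Hom_P(\Ext^{n-i}_P(M,\omega_P),E)$ form universal covariant $\delta$-functors from finitely generated $\hat P$-modules to Artinian $\hat P$-modules: they agree on the free module $P$ in degree $i=n$ by the computation above, they both vanish outside the range $[0,n]$ (the upper bound is the depth/dimension statement for the regular ring, compare Proposition \ref{vanishing}), and both are effaceable by free covers. A dimension-shifting argument using short exact sequences $0\ra M'\ra P^r\ra M\ra 0$ then propagates the isomorphism to every $M$ and every $i$, and Matlis duality between Artinian and Noetherian $\hat P$-modules finally converts this abstract isomorphism into the claimed nondegenerate pairing with values in $K$. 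Descending the pairing from $\hat P$ back to $P$ is automatic because $H^i_\m(M)$ and $\Ext^{n-i}_P(M,\omega_P)$ were unchanged by completion.
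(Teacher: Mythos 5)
The paper does not actually prove this statement: it is quoted as Grothendieck's local duality theorem with a pointer to Bruns--Herzog, Theorem 3.5.8, and used as a black box. Your outline --- passage to the completion, reduction to a regular ring via Cohen's structure theorem and the change-of-rings isomorphism $\Ext^{n-i}_P(M,\omega_P)\cong\Ext^{(n+c)-i}_R(M,R)$, identification of $H^n_{\m}(P)$ with the injective hull $E$ of the residue field via the stable Koszul complex, and a dimension-shifting/Matlis-duality argument for the non-degeneracy --- is precisely the standard proof found in that reference and is correct as a sketch; the only point worth flagging is that the change-of-rings step needs $\Ext^q_R(P,R)=0$ for $q\neq c$, i.e.\ that $P$ is Cohen--Macaulay, a hypothesis the paper's formulation leaves implicit but which holds in all of its applications, where $P$ is regular.
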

In particular, for $M=P/J$ and $i=0$ we obtain a non-degenerate
pairing
\[ P/J \times Ext^n_P(P/J,\omega_P) \lra K\]
as $H^0_{\m}(P/J)=P/J$.
The choice of an isomorphism $\phi: P \stackrel{\cong}{\lra} \omega_P$ induces isomorphisms
\[Ext^n_P(P/J,\omega_P)\cong Ext^n_P(P/J,P)\cong P/J\]
then hence provides us with a perfect pairing
\[B_{\phi}: P/J \times P/J \lra K \]
As $B_{\phi}$ is $P$-linear, it follows that it factors over the 
multiplication map and is of the form $B_{\phi}(a,b)=\beta(a\cdot b)$
for some linear form
\[\beta: P/J \lra K\]
The linear space $ker(\beta)$ is the {\em socle} of $P/J$, that is, its 
unique minimal ideal. 
There is a classical result of Scheja and Storch that states that this socle has  a { canonical generator}.

\begin{theorem} \cite{schejastorch}
If $char(K)=0$ and $x_1,\ldots,x_n$ are generators of the maximal ideal, then the Jacobian determinant

\[ h:=\left|\begin{array}{ccc} \frac{\partial f_1}{\partial x_1}& \ldots &\frac{\partial f_1}{\partial x_n}\\
\vdots&&\vdots\\
\frac{\partial f_n}{\partial x_1} &\ldots& \frac{\partial f_n}{\partial x_n}\\
 \end{array}\right|\] 
is a generator for the socle of $P/J$.
\end{theorem}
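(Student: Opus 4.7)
The plan is to follow the classical Scheja--Storch strategy via auxiliary variables. Introduce new coordinates $y_1, \ldots, y_n$ and work in the completed tensor product $P \,\widehat{\otimes}_K\, P = K[[x_1, \ldots, x_n, y_1, \ldots, y_n]]$. Since each $f_i(x) - f_i(y)$ vanishes on the diagonal $\{x = y\}$, one can expand
\[
f_i(x) - f_i(y) \;=\; \sum_{j=1}^n c_{ij}(x,y)\,(x_j - y_j)
\]
with $c_{ij} \in K[[x,y]]$, arranged by a divided-difference construction so that $c_{ij}(x,x) = \partial f_i/\partial x_j$. Set $D(x,y) := \det\bigl(c_{ij}(x,y)\bigr)$. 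Two specializations are relevant: the diagonal gives $D(x,x) = h$, while the zero-section gives $D(x,0) = \Delta := \det(a_{ij})$, where $a_{ij} := c_{ij}(x,0)$ presents $f_i = \sum_j a_{ij}\, x_j$.

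The first step is Cramer's rule in $K[[x,y]]$: for each $k$,
\[
(x_k - y_k)\,D \;=\; \sum_{i=1}^n C_{ki}(x,y)\,\bigl(f_i(x) - f_i(y)\bigr) \;\in\; \bigl(f_i(x),\, f_i(y)\bigr),
\]
where $C_{ki}$ is the $(k,i)$-cofactor of $(c_{ij})$. Specializing $y = 0$ yields $x_k \Delta \in J$, so the class $[\Delta]$ lies in $\mathrm{Soc}(A)$ where $A := P/J$. A direct Koszul-complex computation identifies $[\Delta]$ as the image of $1 \in P$ under the Gorenstein isomorphism $\Ext^n_P(P/J, P) \cong P/J$ of Theorem \ref{localduality}, using the self-duality of the Koszul resolution of the regular sequence $f_1, \ldots, f_n$; in particular $[\Delta] \neq 0$, and hence generates the one-dimensional socle.

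To pass from $[\Delta]$ to $[h]$, one identifies $[D] \in A \otimes_K A$ with the canonical Casimir element $\sigma$ of the Frobenius algebra $A$, that is, the element corresponding to $\mathrm{id}_A$ under the isomorphism $A \otimes_K A \cong \Hom_K(A, A)$. The Cramer identity above places $[D]$ in the annihilator of the diagonal ideal $\mathfrak{a} = \ker(\mathrm{m})$, where $\mathrm{m} : A \otimes_K A \to A$ is multiplication; Gorenstein duality for the Artinian $K$-algebra $A \otimes_K A$ identifies this annihilator as a cyclic $A$-module generated by $\sigma$, and the $y = 0$ specialization (under which both $[D]$ and $\sigma$ reduce to $[\Delta]$) pins down $[D] = \sigma$ in the natural normalization. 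Applying $\mathrm{m}$ yields $[h] = \mathrm{m}(\sigma) = \mu(f) \cdot [\Delta]$ in $A$, the last equality being the standard fact that the multiplication image of the Casimir equals $\dim_K(A)$ times the socle generator. Since $\mu(f)$ is a positive integer and $char(K) = 0$, this factor is nonzero, so $[h]$ also generates the socle.

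The main obstacle I anticipate is the identification of $[D]$ with the Casimir $\sigma$ and the computation $\mathrm{m}(\sigma) = \mu(f) \cdot s$, both of which require careful bookkeeping of the Gorenstein structure on $A \otimes_K A$. The characteristic-zero hypothesis is essential here: if $char(K) = p$ divides $\mu(f)$, the scalar $\mu(f)$ vanishes in $K$ and $[h]$ need not generate the socle --- as in the toy example $f = x^p$, where $h = p\, x^{p-1} = 0$ even though $\Delta = x^{p-1}$ still generates $\mathrm{Soc}(A)$ for $A = K[[x]]/(x^p)$.
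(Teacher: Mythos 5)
The paper does not prove this statement; it is quoted from \cite{schejastorch} without proof, so there is no internal argument to compare against. What you have written is, in outline, the classical Scheja--Storch proof itself: the divided-difference matrix $(c_{ij})$, the determinant $D$ killed into $\mathrm{Ann}(\ker \mathrm{m})$ by Cramer's rule, the identification of $[D]$ with the Casimir element of a Frobenius structure $\ell$ on $A=P/J$, and the trace identity $\mathrm{m}(\sigma)=\dim_K(A)\cdot s$ (with $s$ the socle generator normalized by $\ell(s)=1$), which is exactly where $\mathrm{char}(K)=0$ enters. The skeleton is sound, and your closing example $f=x^p$ correctly isolates the role of the hypothesis.

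Two steps need more care than you give them. First, the nonvanishing of $[\Delta]=[\det(a_{ij})]$ is Wiebe's theorem; the ``direct Koszul-complex computation'' is the right idea (the matrix $(a_{ij})$ induces a comparison map $K_{\bullet}(x_1,\ldots,x_n)\to K_{\bullet}(f_1,\ldots,f_n)$ which in top degree is multiplication by $\Delta$), but it is a genuine lemma, not a remark. Second, and more seriously, your normalization $[D]=\sigma$ is circular as written: you invoke the $y=0$ specialization to pin down $[D]=\sigma$, but that presupposes $[\Delta]=s$, i.e.\ $\ell([\Delta])=1$, which is precisely what the normalization is meant to produce, and you have not yet chosen $\ell$. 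The standard repair is to \emph{define} $\ell$ by the condition $(\ell\otimes\mathrm{id})([D])=1$ and verify it is a dualizing form; alternatively, note that the conclusion survives a unit ambiguity: from $[D]\in\mathrm{Ann}(\ker\mathrm{m})$ and $(\mathrm{id}\otimes\epsilon)([D])=[\Delta]\neq 0$ one gets $[D]=u\cdot\sigma$ with $u$ a unit of $A\otimes_K A$, whence $[h]=\mathrm{m}([D])=\epsilon(\mathrm{m}(u))\cdot\mu(f)\cdot s$ is still a nonzero multiple of the socle generator when $\mathrm{char}(K)=0$. With either repair the argument is complete.
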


\begin{remark}
In the case where $f \in P$ and $f_i=\partial f/\partial x_i$, the complex 
can be identified (up to some signs) with the $df \wedge$-complex
\[ 0 \lra P \stackrel{df \wedge }{\lra}\Omega^1 \stackrel{df\wedge}{\lra} \ldots \lra \Omega^{n-1}
\stackrel{df \wedge}{\lra}\Omega^n \lra 0\]
and the natural pairing takes the form
\[ B: \Omega^n/df\wedge \Omega^{n-1} \times\;\; \Omega^n/df\wedge \Omega^{n-1} \lra K\]
It is usually called {\em residue pairing} and has an analytic expression
as
\[ B(P\omega,Q\omega)=\frac{1}{(2\pi i)^n}\int_{T_{\epsilon}} \frac{PQ\omega}{\partial_1f\ldots \partial_n f}\]
where $\omega:=dx_1\wedge dx_2 \wedge \ldots \wedge dx_n$ and $T_{\epsilon}:=\{x\;|\;|\partial_i f|=\epsilon\}$, $0<\epsilon \ll 1$. 
In the papers \cite{giventalvarchenko}, \cite{varchenko} the relation between
this pairing and the Poincar\'e-pairing in the cohomology of the Milnor fibre
is described. The pairing is also the first in a sequence of
{\em higher residue pairings}, introduced by K. Saito, which express the 
self-duality of the Gau{\ss}-Manin system of the singularity, \cite{saito}.
\end{remark}

\section{Homology and Cohomology}

We have seen that in the case of a regular sequence ${\bf f}$,
the origin of the pairing on $P/J$ lies in the self-dual nature of the 
Koszul complex. We now investigate in general what consequence this 
self-duality has for the Koszul homology groups. 

\begin{proposition} \label{selfdual} (c.f. \cite{eisenbud}, prop. 17.15)
Let ${\bf f}=f_1,f_2,\ldots,f_n$ be a sequence of elements in a ring $P$.
There are isomorphisms
\[ \alpha_p: H_p\left({\bf f}\right)\cong H^{n-p}\left({\bf f}\right), \;\;p=0,1,\ldots,n,
\] 
where $H^p({\bf f}):= H^p(Hom_P\left(K_{\bullet}\left({\bf f}\right),P\right))$
is the Koszul cohomology.
\end{proposition}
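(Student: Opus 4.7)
The plan is to exhibit an explicit isomorphism between $K_\bullet({\bf f})$ and its shifted dual complex $\textup{Hom}_P(K_\bullet({\bf f}),P)[n]$, coming from the self-duality of the exterior algebra $\bigwedge^\bullet P^n$.

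First I would record that $K_p=\bigwedge^p P^n$ is free over $P$ with basis $e_{i_1}\wedge\cdots\wedge e_{i_p}$ indexed by $1\le i_1<\cdots<i_p\le n$, and that the wedge product gives a $P$-bilinear pairing
\[
\wedge: K_p\times K_{n-p}\lra K_n=\bigwedge\nolimits^n P^n,\qquad K_n\cong P
\]
once we fix the generator $e_1\wedge\cdots\wedge e_n$. On basis elements, the wedge of $e_{i_1}\wedge\cdots\wedge e_{i_p}$ with $e_{j_1}\wedge\cdots\wedge e_{j_{n-p}}$ is $\pm(e_1\wedge\cdots\wedge e_n)$ exactly when $\{j_\ast\}$ is the complement of $\{i_\ast\}$ and is $0$ otherwise, so this is a perfect pairing of free $P$-modules. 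It therefore produces isomorphisms
\[
\alpha_p: K_p\stackrel{\cong}{\lra}\textup{Hom}_P(K_{n-p},P),\qquad \omega\longmapsto \bigl(\eta\mapsto \omega\wedge\eta\bigr),
\]
of the right ranks.

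Next I would check that $\alpha_\bullet$ intertwines the Koszul differential $d$ with the transposed differential $d^\ast$, up to a uniform sign. The Koszul differential is contraction by $\sum_i f_i\,e_i^\ast$, and thus acts as an odd derivation on $K_\bullet=\bigwedge^\bullet P^n$. For $\omega\in K_p$ and $\eta\in K_{n-p+1}$ one has $\omega\wedge\eta\in K_{n+1}=0$, so
\[
0=d(\omega\wedge\eta)=d\omega\wedge\eta+(-1)^p\,\omega\wedge d\eta.
\]
Reading off the top-degree component and unwinding the definitions, this translates into $d^\ast\alpha_p(\omega)=(-1)^{p+1}\alpha_{p-1}(d\omega)$, i.e.\ $\alpha_\bullet$ is a chain isomorphism up to signs between $K_\bullet({\bf f})$ and $\textup{Hom}_P(K_\bullet({\bf f}),P)$ shifted by $n$. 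The main (and only) obstacle is tracking these signs carefully; I would pin them down on basis elements in low degrees before stating the general formula, as done in Eisenbud's book.

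Finally, since a chain isomorphism induces isomorphisms on (co)homology, taking homology of both sides yields
\[
\alpha_p: H_p({\bf f})\stackrel{\cong}{\lra} H^{n-p}({\bf f}), \qquad p=0,1,\ldots,n,
\]
as required.
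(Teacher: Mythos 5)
Your proposal is correct and follows essentially the same route as the paper: both construct the isomorphism $K_p\cong \Hom_P(\bigwedge^{n-p}P^n,P)$ from the perfect wedge pairing into $\bigwedge^n P^n\cong P$ (the paper writes it on basis elements as $e_I\mapsto \textup{sign}(\sigma)\phi_J$ with $J$ complementary to $I$, which is exactly your map) and then verify compatibility with the differentials up to sign, deferring the detailed sign bookkeeping to Eisenbud, section 17.4. Your derivation of the sign via the antiderivation property of contraction and the vanishing of $\bigwedge^{n+1}P^n$ is in fact slightly more explicit than what the paper records.
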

\begin{proof}
Using the basis $e_1,e_2,\ldots,e_n$ for the free module $P^n$ and the
dual basis $\phi_1,\ldots,\phi_n$ for the dual module $Hom_P(P^n,P)$, one
obtains  basis elements $e_I:=e_{i_1}\wedge e_{i_2} \wedge \ldots \wedge e_{i_p}$ for
$\bigwedge^p P^n$ and $\phi_I=\phi_{i_1}\wedge\ldots \wedge\phi_{i_p}$, $(I=(i_1,i_2,\ldots,i_p))$ for the dual module $Hom_P(\bigwedge^p P^n,P)$. 
Define isomorphisms 
$\alpha_p:\bigwedge P^p \lra (\bigwedge^pP^n)^*:=Hom_P(\bigwedge^{n-p}P^n,P)$
by setting $\alpha_p(e_I)=sign(\sigma)\phi_J$
where $J$ is the sequence of indices complementary to $I$ and $sign(\sigma)$
is the sign of the permutation that puts the sequence $(I,J)$ into $(1,2,\ldots,n)$. One verifies that in this way one obtains a mapping between complexes
$\bigwedge^{\bullet} P^n$ and $(\bigwedge^{\bullet} P^n)^*$. 

%
We refer to \cite{eisenbud}, section 17.4 for more details.
\end{proof}

\begin{remark} The modules $H_i(\bf f)$ and $H^i(\bf f)$ depend only on the
ideal $J$ generated by $f_1,f_2,\ldots,f_n$. Note however, that changing 
the order changes $\alpha_p: H_p({\bf f})\lra H^{n-p}({\bf f})$ by a sign. 
\end{remark}

In general cohomology is also {\em dual} to homology, in the 
following sense.
\begin{theorem}
Let $P$ be a ring, $(F_{\bullet},\partial_{\bullet})$ a complex of free 
$P$-modules and $M$ a $P$-module. 
Then there exists a spectral sequence with
$$E_2^{p,q}=Ext^p_P(H_q(F_{\bullet}),M)\Longrightarrow H^{p+q}(Hom_P(F_{\bullet},M)).$$
\end{theorem}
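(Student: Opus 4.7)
The plan is the standard Cartan--Eilenberg hypercohomology argument: construct a first-quadrant double complex whose two associated spectral sequences both converge to the same abutment, with one collapsing and the other delivering the claimed $E_2$ page.

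First I would choose an injective resolution $M \lra I^\bullet$ of $M$ in the category of $P$-modules and form the double complex
\[ C^{p,q} := \Hom_P(F_q, I^p), \]
whose two differentials are induced respectively by $\partial : F_{q+1} \lra F_q$ and by $I^p \lra I^{p+1}$ (with the usual sign correction on one of them). Let $T^\bullet$ denote its total complex. The standard machinery produces two spectral sequences converging to $H^\bullet(T^\bullet)$, one obtained by taking cohomology first in the $F$-direction and the other by taking cohomology first in the $I$-direction.

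Taking cohomology first in the $F$-direction exploits that each $I^p$ is injective: then $\Hom_P(-,I^p)$ is exact, so it carries $H_q(F_\bullet)$ to $H^q\bigl(\Hom_P(F_\bullet,I^p)\bigr) = \Hom_P(H_q(F_\bullet),I^p)$. Passing next to cohomology in the $I$-direction produces
\[ E_2^{p,q} = \Ext^p_P(H_q(F_\bullet), M), \]
which is exactly the desired $E_2$ term. Taking cohomology first in the $I$-direction exploits instead that each $F_q$ is free, hence projective, so $\Hom_P(F_q, -)$ is exact and the $I$-direction cohomology of $\Hom_P(F_q, I^\bullet)$ equals $\Hom_P(F_q, M)$ concentrated in degree $0$. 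This second spectral sequence therefore collapses at $E_2$, identifying the abutment as $H^{p+q}(\Hom_P(F_\bullet, M))$. Comparing the two spectral sequences gives the statement.

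No step is a genuine obstacle; this is essentially bookkeeping. The only care needed is the usual one with sign conventions on the total differential, and a remark that convergence is unproblematic in the setting where this result will be applied, namely the bounded Koszul complex $F_\bullet = K_\bullet({\bf f})$.
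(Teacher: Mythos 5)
Your proof is correct and follows essentially the same route as the paper: an injective resolution of $M$, the double complex $\Hom_P(F_q,I^p)$, and the comparison of its two spectral sequences, one collapsing via projectivity of the $F_q$ to identify the abutment and the other yielding $\Ext^p_P(H_q(F_\bullet),M)$ via injectivity of the $I^p$. No discrepancies to report.
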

\begin{proof} This is of course classical, see  for example \cite[section XVI.2]{eilen}. It can be shown as follows: consider an injective resolution $(G^{\bullet},\delta^{\bullet})$ of
$M$ and the double complex $C_{\bullet,\bullet}$ with terms $C_{p,q}=Hom_P(F_{q}, G^{p})$ and differentials induced from $\partial$ (going up. increasing $q$) and $\delta$ (going right, increasing $p$).
%
The spectral sequence obtained by first taking $\delta$ and then $\partial$
degenerates at $E_2$ and has $H^p(Hom_P(F_{\bullet},M))$ at spot $(p,0)$ as
homology. The other spectral sequence obtained by first taking $\partial$ and followed
by $\delta$ has as $E_2$-term
\[
\begin{array}{c|cccc}
q&Hom_P(H_q, M)&Ext^1_P(H_q, M)&\ldots&Ext^p_P(H_q, M)\\
\vdots&\vdots&\vdots& &\vdots\\
1&Hom_P(H_1, M)&Ext_P^1(H_1, M)&\ldots&Ext^p_P(H_1, M)\\
0&Hom_P(H_0, M)&Ext_P^1(H_0, M)&\ldots&Ext^p_P(H_0, M)\\
\hline
&0&1&\ldots&p
\end{array}
\]
where $H_i:=H_i(F_{\bullet})$.
\end{proof}

By taking $M=P$, this spectral sequence can be used to connect the homology and 
the cohomology of a complex. As a corollary, we note the two special situations 
which lead to exact sequences.

\begin{corollary}\label{corollaryspek}
(i) If the groups $Ext^k_P(H_i(F_{\bullet}),P)$ vanish for $k>1$, the spectral sequence collapses to exact sequences
$$
0\lra Ext^1_P(H_k(F_{\bullet}),P)\lra H^{k+1}(F^{\bullet})\lra Hom_P(H_{k+1}(F_{\bullet}),P)\lra 0.
$$
(``universal coefficient theorem'')\\

(ii) If the homology groups $H_k:=H_k(F_{\bullet})$ vanish for $k>1$, we get a long exact sequence 
\[\cdots \ra Ext^k_P(H_0,P)\ra H^k \ra Ext^{k-1}_P(H_1,P)\stackrel{d_2}{\ra} Ext^{k+1}_P(H_0,P)\ra \cdots\]
where $H^i:=H^i(F^{\bullet})$.
\end{corollary}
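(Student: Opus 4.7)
The plan is to argue in both cases that the $E_2$-page of the spectral sequence from the preceding theorem is supported in a narrow strip, so that almost every differential is forced to vanish and the sequence degenerates early. The two cases are dual to each other: in (i) the $E_2$-page is concentrated in two columns, in (ii) in two rows. In both situations the cohomology $H^n(\Hom_P(F_\bullet,P))$ at total degree $n$ receives contributions from only two boxes, and this gives either a short exact sequence outright or short exact sequences that splice via $d_2$.

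For part (i), the hypothesis is that $E_2^{p,q}=\Ext^p_P(H_q(F_\bullet),P)=0$ whenever $p>1$, so only the columns $p=0,1$ survive. Since the $r$-th differential shifts $(p,q)$ to $(p+r,q-r+1)$, every $d_r$ with $r\ge 2$ lands in a column $p\ge 2$ and therefore in a zero group. Hence $E_\infty=E_2$. At total degree $n$ only the boxes $(0,n)$ and $(1,n-1)$ contribute, and the associated two-step filtration on $H^n$ yields
\[
0\lra \Ext^1_P(H_{n-1}(F_\bullet),P)\lra H^n(\Hom_P(F_\bullet,P))\lra \Hom_P(H_n(F_\bullet),P)\lra 0.
\]
Reindexing by $n=k+1$ gives the universal coefficient sequence as stated.

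For part (ii), the hypothesis $H_q(F_\bullet)=0$ for $q>1$ concentrates the $E_2$-page in the two rows $q=0,1$. Since $d_r$ shifts $q$ by $1-r$, for $r\ge 3$ the target lies in a row $q\le -1$ and vanishes; only $d_2\colon E_2^{p,1}\to E_2^{p+2,0}$ can be nontrivial. Thus the sequence degenerates at $E_3$ with
\[
E_3^{p,0}=\mathrm{coker}\bigl(d_2\colon \Ext^{p-2}_P(H_1,P)\to \Ext^p_P(H_0,P)\bigr),
\]
\[
E_3^{p,1}=\ker\bigl(d_2\colon \Ext^p_P(H_1,P)\to \Ext^{p+2}_P(H_0,P)\bigr).
\]
Only the boxes $(n,0)$ and $(n-1,1)$ survive in total degree $n$, giving
\[
0\lra E_3^{n,0}\lra H^n\lra E_3^{n-1,1}\lra 0,
\]
and splicing these short exact sequences along the $d_2$ connecting maps produces the long exact sequence announced.

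The only real obstacle is bookkeeping: keeping track of which filtration quotient appears as subobject and which as quotient, the direction of the $d_r$ differentials, and the identification of the edge maps with the splicing connectors. Once the vanishing pattern of $E_2$ is noted in each case, both statements follow formally from standard spectral-sequence machinery, so no deeper input beyond the theorem cited is needed.
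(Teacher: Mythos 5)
Your argument is correct and is exactly the intended one: the paper states this as an immediate corollary of the preceding spectral sequence, relying on the standard two-column (degeneration at $E_2$) and two-row (degeneration at $E_3$ plus splicing along $d_2$) lemmas that you spell out. The bookkeeping of which filtration quotient is the subobject matches the stated sequences, so nothing is missing.
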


We specialise the above to the case of an almost complete intersection.

\begin{definition}
A sequence ${\bf f}=f_1,f_2,\ldots,f_n$ of elements in a ring $P$ is said to
defines an {\em almost complete intersection} if $codim(V(J))=n-1$. 
\end{definition}
 
It follows from \ref{vanishing} that in this case one has two non-vanishing Koszul homology groups, $H_0({\bf f})=P/J$ and one further module
$H_1({\bf f})$.

\begin{proposition} \label{aci}
Let ${\bf f}=f_1,f_2,\ldots,f_n$ define an almost complete intersection in $P$ 
and put $H_i:=H_i({\bf f})$. Then:\\
(i)
\[ Ext^{n-1}_P(H_0,P)\cong H_1\]
(ii) There is an exact sequence
\[
0 \lra Ext^{n}_P(H_0,P) \lra H_0 \lra Ext^{n-1}_P(H_1,P) \stackrel{d_2}{\lra} Ext_P^{n+1}(H_0,P) \lra 0\]
(iii) For $k \ge n$ there are isomorphisms
\[ Ext^{k}_P(H_1,P) \stackrel{d_2}{\lra} Ext_P^{k+2}(H_0,P)\]
\end{proposition}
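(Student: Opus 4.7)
My plan is to combine the self-duality of the Koszul complex (Proposition \ref{selfdual}) with the Ext spectral sequence (Corollary \ref{corollaryspek}(ii)) applied to $F_\bullet = K_\bullet({\bf f})$ and $M=P$. First I would invoke Proposition \ref{vanishing} to conclude that $H_k:=H_k({\bf f})=0$ for $k\ge 2$, and then read off from Proposition \ref{selfdual} that $H^k({\bf f})=0$ for $k\le n-2$, $H^{n-1}({\bf f})\cong H_1$, and $H^n({\bf f})\cong H_0$. Because the only nonzero Koszul homology sits in degrees $0$ and $1$, Corollary \ref{corollaryspek}(ii) applies and supplies the long exact sequence
\[
\cdots \lra \Ext^k_P(H_0,P)\lra H^k\lra \Ext^{k-1}_P(H_1,P)\stackrel{d_2}{\lra}\Ext^{k+1}_P(H_0,P)\lra H^{k+1}\lra\cdots
\]
around which the whole argument will be organised.

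The indispensable extra input is the vanishing $\Ext^k_P(H_i,P)=0$ for $k\le n-2$ and $i=0,1$. I would obtain it from the standard annihilator trick: for any Koszul cycle $x\in K_p$ one has $f_j x=\partial(e_j\wedge x)$, so $J\cdot H_i=0$ and each $H_i$ is a $P/J$-module supported on $V(J)$, a codimension-$(n-1)$ subvariety of the regular (hence Cohen--Macaulay) ring $P$. Picking a regular sequence of length $n-1$ inside $J$ then gives the vanishing by the standard grade estimate for Ext.

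With the vanishing in place, the three assertions drop out by inspecting three neighbouring stretches of the long exact sequence. For (i) I read the slice around $k=n-1$: the terms $\Ext^{n-3}_P(H_1,P)$, $\Ext^{n-2}_P(H_1,P)$ and $H^{n-2}$ all vanish, pinching the sequence to an isomorphism $\Ext^{n-1}_P(H_0,P)\cong H^{n-1}\cong H_1$. For (ii) I read the slice around $k=n$: substituting $\Ext^{n-2}_P(H_1,P)=0$, $H^n\cong H_0$ and $H^{n+1}=0$ reproduces exactly the asserted four-term exact sequence. For (iii), once $k\ge n$ both $H^{k+1}$ and $H^{k+2}$ vanish, so the relevant slice collapses to the isomorphism $d_2\colon\Ext^k_P(H_1,P)\stackrel{\cong}{\lra}\Ext^{k+2}_P(H_0,P)$.

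The only step that is not purely mechanical is the justification of the low-degree Ext-vanishing; every other step is a bookkeeping exercise inside a single long exact sequence. I would therefore regard the combination of the annihilator trick with Cohen--Macaulayness of $P$ as the heart of the proof — a small but essential point without which the three-way split into (i), (ii), (iii) would not cleanly emerge.
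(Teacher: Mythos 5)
Your proposal is correct and follows essentially the same route as the paper: self-duality of the Koszul complex to identify $H^{n-1}\cong H_1$ and $H^n\cong H_0$ as the only nonzero cohomology, the vanishing $\Ext^p_P(H_i,P)=0$ for $p<n-1$ (which the paper gets from Ischebeck's lemma, equivalent to your grade estimate via a regular sequence in $J$ annihilating the $H_i$), and then reading off (i)--(iii) from the long exact sequence of Corollary \ref{corollaryspek}(ii). No gaps.
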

\begin{proof}
The self-duality \ref{selfdual} of the Koszul complex gives
$H^{n-1}\left({\bf f}\right)=H_1\left({\bf f}\right)$ and 
$H^n\left({\bf f}\right)=H_0\left({\bf f}\right)$ as only non-vanishing
Koszul cohomology groups. As both modules are supported on $V(J)$, 
it follows from Ischebeck's lemma  that 
$Ext^p_P(H_i({\bf f}),P)=0$, for $p<n-1$. The theorem follows then
immediately from the above long exact sequence \ref{corollaryspek} (ii).
\end{proof}

\section{One-dimensional almost complete intersections}
We now make the further assumption, that $dim(P)=n$, so that
the almost complete intersection  ${\bf f}:=f_1,\ldots,f_n$ gives
an ideal $J=(f_1,f_2,\ldots,f_n)$ that defines a one-dimensional
locus $V(J) \subset Spec(P)$.

\begin{corollary}\label{koszhom}
Let ${\bf f}:=f_1,\ldots,f_n$ define a one-dimensional almost complete 
intersection. Then:\\
(i)
\[ Ext^{n-1}_P(H_0({\bf f}),P)\cong H_1({\bf f}),\;\;Ext_P^{n}(H_1({\bf f}),P)=0.\]
(ii) There is an exact sequence
\[
0 \lra Ext^{n}_P(H_0,P) \lra H_0({\bf f})\lra Ext^{n-1}_P(H_1({\bf f}),P) \lra 0\]
\end{corollary}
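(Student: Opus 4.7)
My plan is to derive this corollary as a direct specialisation of Proposition \ref{aci}, using the additional hypothesis $\dim(P)=n$ to kill the higher $\mathrm{Ext}$-terms that appeared there. The key external fact I would invoke is that $P$, being a regular local $K$-algebra of dimension $n$ (as fixed at the beginning of the excerpt), has global dimension $n$; hence every finitely generated $P$-module has projective dimension at most $n$, and $\mathrm{Ext}^k_P(M,P)=0$ for all $k>n$. This is the single ingredient that distinguishes the present setting from the more general one of Proposition \ref{aci}.

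For part~(i), the first isomorphism $\mathrm{Ext}^{n-1}_P(H_0({\bf f}),P)\cong H_1({\bf f})$ is just a restatement of Proposition \ref{aci}(i), so nothing further is needed. For the vanishing $\mathrm{Ext}^n_P(H_1({\bf f}),P)=0$, I would apply Proposition \ref{aci}(iii) with $k=n$, which supplies an isomorphism
\[
\mathrm{Ext}^n_P(H_1({\bf f}),P)\;\stackrel{d_2}{\lra}\;\mathrm{Ext}^{n+2}_P(H_0({\bf f}),P).
\]
The target vanishes by the global dimension bound, so the source does too.

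For part~(ii), I would take the four-term exact sequence from Proposition \ref{aci}(ii) and note that its rightmost term $\mathrm{Ext}^{n+1}_P(H_0,P)$ likewise vanishes for dimensional reasons. The differential $d_2$ in that sequence is therefore forced to be the zero map, and the sequence collapses to the desired short exact sequence
\[
0\lra \mathrm{Ext}^n_P(H_0,P)\lra H_0({\bf f})\lra \mathrm{Ext}^{n-1}_P(H_1({\bf f}),P)\lra 0.
\]

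There is no real obstacle here; the entire content is bookkeeping against Proposition \ref{aci}. The only point that deserves a moment's care is making sure that the global-dimension vanishing is applied to finitely generated modules, which is automatic since $H_0$ and $H_1$ are Koszul homology of a finite free complex over a Noetherian ring.
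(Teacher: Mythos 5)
Your argument is correct and is essentially the paper's own proof: the authors likewise observe that $P$ regular local of dimension $n$ forces $\mathrm{Ext}^k_P(-,P)=0$ for $k>n$ and then read the corollary off from Proposition \ref{aci}. Your write-up merely makes explicit which part of \ref{aci} supplies each assertion, which the paper leaves implicit.
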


\begin{proof}
As $P$ is a assumed to be a regular local ring of dimension $n$, we have
$Ext^k_P(-,P)=0$ for $k > n$. So the result follows from the above proposition
\ref{aci}
\end{proof}

\begin{definition}\label{imodj} For a one-dimensional almost complete intersection defined by ${\bf f}$ we put 
\[ M:=Ext^{n}_P(H_0({\bf f}),P) \]
As $Ext^{n}_P(H_0({\bf f}),P) \subset H_0({\bf f})=P/J$ we see that 
\[M=I/J\]
for an ideal $I \subset P$.
\end{definition}
In other words, $I$ is defined by having an isomorphism of exact
sequences
\[
\begin{array}{cccccccccc}
0 &\lra &Ext^{n}_P(H_0({\bf f}),P)& \lra &H_0({\bf f})&\lra& Ext^{n-1}_P(H_1({\bf f}),P)& \lra& 0\\
&&\cong\downarrow&&\cong \downarrow&&\cong\downarrow&&\\
0&\lra&I/J&\lra&P/J&\lra&P/I&\lra& 0\\
\end{array}
\]

\begin{proposition} \label{saturation} 
The ideal $I$ is equal to the {\em saturation} of $J$ with respect to the maximal ideal:
\[
I=\bigcup_k(J:\mathfrak{m}^k) \supset J
\] 
Thus the module $M=I/J$ is the \emph{$\m$-torsion submodule of $P/J$}: 
$$M=I/J=H^0_{\m}(P/J)$$
\end{proposition}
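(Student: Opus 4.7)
The plan is to prove the equivalent statement $I/J = H^0_{\m}(P/J)$ as submodules of $P/J$; this yields the saturation description directly, since by definition of the $\m$-torsion submodule one has $H^0_{\m}(P/J) = \bigcup_k (J:\m^k)/J$. Both required inclusions will follow easily from what has already been set up in the paper.

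First I would establish $I/J \subset H^0_{\m}(P/J)$ by showing that $M = \Ext^n_P(P/J,P)$ is annihilated by a power of $\m$. The key point is that for any prime $\mathfrak{p}\neq \m$, the localization $P_\mathfrak{p}$ is a regular local ring of dimension strictly less than $n$, and hence has global dimension $<n$; consequently $\Ext^n_{P_\mathfrak{p}}((P/J)_\mathfrak{p}, P_\mathfrak{p})=0$. Thus $\textup{Supp}(M)\subset\{\m\}$, so the finitely generated module $M$ has finite length, and $\m^k\cdot M = 0$ for some $k$. Via the inclusion $I/J\hookrightarrow P/J$ furnished by Corollary \ref{koszhom}(ii), this says exactly that $\m^k\cdot I \subset J$, so every element of $I/J$ is killed by $\m^k$ and hence lies in $H^0_{\m}(P/J)$.

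For the reverse inclusion, I would invoke local duality (Theorem \ref{localduality}) with $M=P/J$ and $i=0$: the non-degenerate pairing there identifies $H^0_{\m}(P/J)$ with the Matlis dual of $\Ext^n_P(P/J,\omega_P)\cong \Ext^n_P(P/J,P)=I/J$, using the identification $\omega_P\cong P$ available for regular $P$. Since $I/J$ has finite length by the previous step, Matlis duality preserves length, so $H^0_{\m}(P/J)$ and $I/J$ are submodules of $P/J$ of equal finite length. Combined with the inclusion already established, this forces equality of the two submodules.

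The only computation requiring real thought is the support calculation for $\Ext^n_P(P/J,P)$; the remainder is a direct application of local duality together with the length-preserving property of Matlis duality on finite length modules. No step presents a serious obstacle; the only bookkeeping subtlety is identifying the canonical embedding $I/J\hookrightarrow P/J$ coming from Corollary \ref{koszhom}(ii) with the $\m$-torsion submodule picked out by local duality, which is immediate from the support calculation.
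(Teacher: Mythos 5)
Your proof is correct, but the decisive step is argued differently from the paper. You both need two inclusions. For $I/J\subseteq H^0_{\m}(P/J)$ the paper simply asserts that $M=\Ext^n_P(H_0({\bf f}),P)$ is artinian, which you justify properly by localizing at primes $\mathfrak{p}\neq\m$ and using that $P_{\mathfrak{p}}$ has global dimension $<n$; that is the standard argument and is fine. For the reverse inclusion the two routes diverge: the paper observes that $H_1({\bf f})$ has a single non-vanishing $\Ext$, hence is Cohen--Macaulay of dimension one, hence so is its dual $P/I\cong\Ext^{n-1}_P(H_1({\bf f}),P)$; therefore $H^0_{\m}(P/I)=0$, and applying the left-exact functor $H^0_{\m}$ to $0\to I/J\to P/J\to P/I\to 0$ forces $H^0_{\m}(P/J)$ into $I/J$. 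You instead invoke local duality for $P/J$ at $i=0$ to identify $H^0_{\m}(P/J)$ with the ($K$-linear, i.e.\ Matlis) dual of $\Ext^n_P(P/J,\omega_P)\cong I/J$ and conclude by comparing finite lengths; since a non-degenerate pairing of finite-dimensional spaces already forces equal dimensions, there is no gap here. The trade-off: the paper's depth argument simultaneously establishes that $P/I$ is Cohen--Macaulay, a fact it exploits immediately afterwards (the ``Cohen--Macaulayfication'' remark and the double-dual computation in Proposition \ref{isoextij}), whereas your argument is shorter and leans only on the local duality theorem already quoted, at the cost of not extracting that structural information about $P/I$ and $H_1({\bf f})$.
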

\begin{proof}
Because $Ext^{n}_P(H_1({\bf f}),P)=0$, we see that the module $H_1({\bf f})$
has $Ext^{n-1}_P(H_1({\bf f}),P)=P/I$ as the only non-vanishing $Ext$. Hence
it is a Cohen-Macaulay $P$-module of dimension one. It follows that 
$H^0_{\m}(P/I)=0$
and thus the artinian module $M=Ext^{n}_P(H_0({\bf f}),P)$ is equal to
$H^0_{\m}(P/J)$. Hence $I$ is nothing but the saturation of $J$ with
respect to ${\m}$. 
\end{proof}

\begin{remark}
In fact this provides an easy was to compute $I$ and $I/J$ using a computer
algebra system like {\sc Singular} or {\sc Macaulay}. The ring $P/I$ obtained
by dividing out the {\bf m}-primary torsion of $P/J$ is sometimes called the
{\em Cohen-Macaulayfication} of $P/J$.
\end{remark}

Using $I/J$ and $P/I$ we can give a slight reinterpretation of the
$Ext$ of Koszul homology.

\begin{theorem}\label{extiso}
The map $P/J \lra P/I$ induces an isomorphism
\[Ext^{n-1}_P(P/I,P) \cong Ext^{n-1}_P(P/J,P)\]
and the map $I/J \lra P/J$ induces an isomorphism
\[Ext^{n}_P(P/J,P) \cong Ext^n_P(I/J,P)\]
\end{theorem}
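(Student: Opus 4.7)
The plan is to feed the short exact sequence
\[ 0 \lra I/J \lra P/J \lra P/I \lra 0 \]
provided by Proposition \ref{saturation} into the long exact sequence of $\Ext^\bullet_P(-,P)$, and to exploit the complementary concentrations of $\Ext$ for the two outer modules: $I/J$ is artinian while $P/I$ is Cohen--Macaulay of dimension one. Both isomorphisms of the theorem will then drop out formally.

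First I would pin down the vanishing of the outer $\Ext$-groups. By Proposition \ref{saturation}, $I/J = H^0_{\m}(P/J)$ is $\m$-torsion and hence of finite length. In a Cohen--Macaulay local ring $P$ of dimension $n$ (here regular), a finite-length module has grade $n$, because its annihilator is $\m$-primary and the depth of $\m$ on $P$ is $n$. Combined with the fact that every finitely generated $P$-module has projective dimension at most $n$ (regularity), this forces $\Ext^k_P(I/J,P) = 0$ for all $k\neq n$. Analogously, $P/I$ is Cohen--Macaulay of dimension one, as was used in the proof of Proposition \ref{saturation}, so its grade equals $n-1$ and, by the Auslander--Buchsbaum formula, so does its projective dimension; hence $\Ext^k_P(P/I,P) = 0$ for all $k\neq n-1$.

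Plugging these into
\[ \cdots \lra \Ext^k_P(P/I,P)\lra \Ext^k_P(P/J,P)\lra \Ext^k_P(I/J,P)\lra \Ext^{k+1}_P(P/I,P)\lra\cdots \]
makes both claims immediate. For $k=n-1$, the neighbouring groups $\Ext^{n-2}_P(I/J,P)$ and $\Ext^{n-1}_P(I/J,P)$ vanish, giving the first isomorphism $\Ext^{n-1}_P(P/I,P)\cong \Ext^{n-1}_P(P/J,P)$, which is exactly the one induced by the surjection $P/J \to P/I$. For $k=n$, the neighbouring groups $\Ext^n_P(P/I,P)$ and $\Ext^{n+1}_P(P/I,P)$ vanish, giving $\Ext^n_P(P/J,P)\cong \Ext^n_P(I/J,P)$, induced by the inclusion $I/J \hookrightarrow P/J$. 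No serious obstacle remains: the structural input for both $I/J$ and $P/I$ was already established in Proposition \ref{saturation}, and functoriality of $\Ext$ identifies the connecting maps of the long exact sequence with the maps named in the theorem.
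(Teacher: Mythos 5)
Your proposal is correct and follows essentially the same route as the paper: apply $\Hom_P(-,P)$ to the short exact sequence $0 \to I/J \to P/J \to P/I \to 0$ and read off the two isomorphisms from the vanishing of the $\Ext$-groups of the outer terms. You merely make explicit (via grade and Auslander--Buchsbaum) the vanishing that the paper summarizes by saying $I/J$ is $\m$-torsion and $P/I$ is $\m$-torsion free.
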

\begin{proof}
Apply $Hom_P(\bullet,P)$ to the short exact sequence of $P$-modules
$0\lra I/J\lra P/J\lra P/I\lra 0$.
As $I/J$ is $\m$-torsion and $P/I$ is ${\m}$-torsion free the result
follows. 
\end{proof}

We now have all the ingredients for the following central result.

\begin{proposition}\label{isoextij}
Let the sequence ${\bf f}=f_1,\ldots,f_n$ define a one-dimensional
almost complete intersection $J$, and $M=I/J=H^0_{\bf m}(P/J)$. Then
there is an isomorphism
\[Ext^n_P(M,P) \cong M.\]
\end{proposition}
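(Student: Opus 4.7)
The plan is to obtain the isomorphism by chaining together two identifications of $M$ that have already been established in the excerpt, so very little new work is required.

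First, I would recall that by Definition \ref{imodj} the module $M$ was introduced precisely as $M = \mathrm{Ext}^n_P(H_0(\mathbf{f}), P)$, and then (using Proposition \ref{saturation} and Corollary \ref{koszhom}) identified with the submodule $I/J \subset P/J$. Since $H_0(\mathbf{f}) = P/J$, this gives the first key identification
\[
M \;\cong\; \mathrm{Ext}^n_P(P/J, P).
\]

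Next, I would invoke the second isomorphism of Theorem \ref{extiso}, which asserts that the inclusion $I/J \hookrightarrow P/J$ induces an isomorphism
\[
\mathrm{Ext}^n_P(P/J, P) \;\cong\; \mathrm{Ext}^n_P(I/J, P) \;=\; \mathrm{Ext}^n_P(M, P).
\]
Composing these two isomorphisms then yields the desired $M \cong \mathrm{Ext}^n_P(M, P)$.

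Strictly speaking there is no real obstacle: the proof is a two-step bookkeeping argument. The only conceptual point worth underlining is that both identifications rest on the fact that $M$ is $\mathfrak{m}$-torsion while $P/I$ is $\mathfrak{m}$-torsion free (Cohen--Macaulay of dimension one), so that applying $\mathrm{Hom}_P(-,P)$ to $0 \to M \to P/J \to P/I \to 0$ causes the cokernel $P/I$ to contribute nothing in top degree, while $M$ itself absorbs the full top $\mathrm{Ext}$ of $P/J$. Thus what looks like a self-duality of $M$ is really a restatement of the fact that the $\mathfrak{m}$-torsion submodule of $P/J$ carries all of $\mathrm{Ext}^n_P(P/J, P)$, and that this top $\mathrm{Ext}$ is intrinsic to $M$. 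Once this is spelled out the chain of isomorphisms above completes the proof.
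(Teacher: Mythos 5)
Your proof is correct and takes essentially the same route as the paper: the decisive step in both is the isomorphism $Ext^n_P(P/J,P)\cong Ext^n_P(I/J,P)$ induced by the inclusion $I/J\hookrightarrow P/J$ from Theorem \ref{extiso}, combined with the identification of $M=I/J$ with $Ext^n_P(P/J,P)$. The only difference is that you read that identification off directly from Definition \ref{imodj} and Corollary \ref{koszhom}(ii), whereas the paper re-derives it by first showing $H_1({\bf f})$ is the dualizing module of $P/I$ and applying Cohen--Macaulay biduality to recover $P/I$ as the cokernel --- a step you are entitled to skip since it is already built into the definition of $I$.
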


\begin{proof}
Combining theorem \ref{extiso} and theorem \ref{koszhom}(i) we obtain an
isomorphism
\[Ext^{n-1}_P(P/I,P)\cong Ext^{n-1}_P(P/J,P) \cong H_1({\bf f}),\]
showing that $H_1({\bf f})$ is isomorphic to the dualising module of $P/I$. Because $P/I$ is Cohen-Macaulay, we have
\[Ext^{n-1}_P(H_1({\bf f}),P)\cong Ext^{n-1}_P(Ext^{n-1}_P(P/I,P),P) \cong P/I.\]

Combining this with \ref{koszhom} (ii) we see that
\[Ext^n_P(P/J,P) \cong ker(P/J \ra P/I)=I/J.\]

But by \ref{extiso} we also have $Ext^n_P(I/J,P) \cong Ext^n_P(P/J,P)$ and
as a result we obtain an isomorphism
\[ Ext^n_P(I/J,P) \cong I/J \]
\end{proof}

As a corollary one finds:

\begin{theorem}\label{pairing}
The module $M=I/J=H_{\m}^0(P/J)$ is an artinian Gorenstein module.
The choice of an isomorphism $\phi: P \stackrel{\cong}{\lra} \omega_P $ 
determines a  non-degenerate pairing
\[
B_{\phi}: M \times M \lra K.
\]
\end{theorem}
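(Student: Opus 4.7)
The plan is to assemble the theorem directly from two ingredients already proved: the self-duality isomorphism $\operatorname{Ext}^n_P(M,P)\cong M$ of Proposition \ref{isoextij} and Grothendieck's local duality (Theorem \ref{localduality}) applied to the module $M$ itself. No further homological input should be needed.

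First I would check that $M$ is artinian. Since $M=H^0_{\m}(P/J)$ is the $\m$-torsion submodule of the Noetherian $P$-module $P/J$, it is finitely generated and supported only at $\m$; hence it has finite length. Next, to justify the term \emph{artinian Gorenstein module}, I recall that this means $\operatorname{Ext}^n_P(M,\omega_P)\cong M$ (the analogue for modules of being a zero-dimensional Gorenstein ring). Choosing $\phi:P\stackrel{\cong}{\lra}\omega_P$ transports the isomorphism $\operatorname{Ext}^n_P(M,P)\cong M$ of Proposition \ref{isoextij} into $\operatorname{Ext}^n_P(M,\omega_P)\cong M$, which establishes the Gorenstein property.

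For the pairing, I would specialise local duality to our $M$. Because $M$ is $\m$-torsion, $H^0_{\m}(M)=M$ and $H^i_{\m}(M)=0$ for $i>0$. Local duality with $i=0$ therefore supplies a canonical non-degenerate pairing
\[
M\times \operatorname{Ext}^n_P(M,\omega_P)\lra H^n_{\m}(\omega_P)\stackrel{tr}{\lra} K.
\]
Composing the second factor with the isomorphism $\operatorname{Ext}^n_P(M,\omega_P)\cong M$ induced by $\phi$ yields the desired pairing
\[
B_{\phi}:M\times M\lra K,
\]
non-degenerate because both ingredients are. As in the zero-dimensional case, $P$-linearity of $B_\phi$ in each argument forces it to factor as $B_\phi(a,b)=\beta(a\cdot b)$ for a linear form $\beta$ on $M$, so the pairing is symmetric and of the expected residue type.

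The main thing to be careful about is that the isomorphism from Proposition \ref{isoextij} really is the one produced by local duality via $\phi$, so that "non-degenerate" transfers cleanly — this is essentially a naturality check: the composition of the local duality pairing with our $\phi$-induced $\operatorname{Ext}^n_P(M,\omega_P)\cong \operatorname{Ext}^n_P(M,P)\cong M$ is a pairing of $M$ with itself, and non-degeneracy of the duality pairing together with the fact that the second map is an isomorphism of $P$-modules gives non-degeneracy of $B_\phi$. No deeper obstacle seems to arise; the theorem is really a packaging of \ref{isoextij} and \ref{localduality}.
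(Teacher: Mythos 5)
Your main argument is correct and is essentially identical to the paper's own proof: local duality applied to $M$ (with $H^0_{\m}(M)=M$) gives the non-degenerate pairing $M\times \Ext^n_P(M,\omega_P)\lra K$, and $\phi$ together with Proposition \ref{isoextij} converts the second factor into $M$.

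One caveat: your closing claim that $P$-linearity forces $B_\phi(a,b)=\beta(a\cdot b)$, and hence symmetry, is not justified and is in fact stated by the paper as an open problem. The argument that works for $P/J$ in the complete intersection case uses the unit element ($B(a,b)=B(ab,1)$); the module $M=I/J$ has no unit, so $P$-bilinearity alone does not make the pairing factor over multiplication, and the authors explicitly say they know no algebraic proof that $B_\phi$ is symmetric. Since the theorem only asserts non-degeneracy, this does not affect the validity of your proof of the stated result, but you should delete or qualify that sentence.
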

\begin{proof}
From local duality \ref{localduality} we obtain a non-degenerate pairing
$$
M \times Ext^n(M,\omega_P) \lra H^n_{\m}(\omega_P)\stackrel{tr}{\lra} K
$$
The choice of an isomorphism $\phi: P \stackrel{\cong}{\lra} \omega_P $ 
determines an isomorphism $Ext^n_P(I/J,P) \cong Ext^n_P(M,\omega_P)$ an
thus  a non-degenerate pairing
$B_{\phi}: M \times M \lra K.$
\end{proof}

\begin{remark}
(i) The material of this section should be rather well-known. For example,
the isomorphism $H_1=Ext_P^{n-1}(P/I,P)$ can be found in \cite{pellikaan3},
but the self-duality of $I/J$ seems to have escaped attention. It is reflected
in the readily observed symmetry of the Hilbert-Poincar\'e polynomial.\\
(ii) Using a computer algebra system, the duality pairing can in be calculated by running through the appropriate sequences and isomorphisms. This was implemented in {\sc Singular} and is described in some detail in the thesis of the second author, \cite{egodiss}. As a simple example, for ${\bf f}=(xy,x^2) \subset K[[x,y]]$,  $\phi(1)=dx\wedge dy$ one has $I=(x)$, $\dim(I/J)=1$ and $B_{\phi}([x],[x])=1$.\\ 
(iii) The above can be generalised, with almost identical proof,
to the case of a one-dimensional complete intersection in an arbitrary
Gorenstein ring $P$. If $\omega_P$ denotes the dualising module of $P$, 
then one obtains a natural pairing
\[B: {\cal M} \times {\cal M} \lra K\]
where ${\cal M}=H_{\bf m}^0(\omega_P/J\omega_P)$.\\
(iv) In the special case of hypersurfaces singularity with one-dimensional 
singular locus,  these modules $I/J$ were studied in \cite{pellikaan1} under the name of {\em Jacobian modules} and
play a r\^ole that can be compared to that of the Milnor ring in the isolated case. See also section 6.\\
(v) In the case of hypersurface singularities with one-dimensional singular locus it is again more natural to consider the cohomology $H^n$ and $H^{n-1}$ of
the $df \wedge$-complex. On the $\m$-torsion submodule of $H^n=\Omega^n/df \wedge \Omega^{n-1}$ (isomorphic to $I/J$) there again is a pairing that does not involve any choice.
In this situation there is also a map $d:H^{n-1}\lra H^n$ induced by exterior
differentiation. That map plays a role in the Gauss-Manin system of $f$, \cite{vanstraten}.\\
(vi) Hypersurfaces in projective space with isolated singularities 
correspond to homogeneous singularities with one-dimensional singular locus. 
The module $I/J$ plays a role in the Dwork-Griffiths description of 
the Hodge-pieces of the cohomology. For example, let 
$f \in \C[x_0,x_1,x_2,x_3,x_4]$ be the equation of a projective 
threefold $X=V(f) \subset \P^4$ of degree $d$ with only nodes as 
singularities, and let $\pi:Y \lra X$ a small resolution.
Then the degree $a=2d-5$ part $(I/J)_a$ of $I/J$ can be identified with 
$H^{2,1}:=H^1(\Omega^2_Y)$ via
\[A \lra \pi^*(Res(\frac{A\Omega}{f}))\]
where $\Omega:=\iota_E(dx_0 \wedge \ldots \wedge dx_4)$.
Similarly, the degree $b=3d-5$ part of $I/J$ is identified with $H^{1,2}:=H^{2}(\Omega^1_Y)$, making a commutative diagram
\[
\begin{array}{ccccc}
(I/J)_a & \times & (I/J)_b & \lra &\C\\
\downarrow &&\downarrow&&\cong \downarrow\\
H^{2,1}&\times&H^{1,2}&\stackrel{\cup}{\lra}&H^{3,3}\\
\end{array}
\] 
For details we refer to \cite{dimcasaitowotzlaw}.
\end{remark}

\section{Behaviour under flat deformations}

We now study the behaviour of the module $I/J$ under deformation. By this we
mean that we let the almost complete interesection $f_1,\ldots,f_n$ depend on 
additional parameters. If we require $H_0=P/(f_1,f_2,\ldots,f_n)$ to
deform in a flat way, then the same will be true for $I/J$.\\

\begin{definition} 
Consider a local ring $S$ with maximal ideal $\m_S$ and $K=S/\m_S$
and let $R$ be a flat $S$-algebra such that $P=K \otimes_S R$. 
A sequence ${\bf F}:=F_1,F_2,\ldots,F_n \in R$ is called relative
almost complete intersection if\\
1) \[f_i:=F_i\;\; \textup{mod}\;\;\m_S\] 
2) $H_0({\bf F})$ is $S$-flat.
\end{definition}

\begin{proposition} \label{flat}
If ${\bf F}$ defines a relative almost complete intersection, then 
$Ext_R^n(H_0({\bf F}),R)$ is free as $S$-module and $Ext_R^k(H_0({\bf F}),R)=0$ for $k \ge n+1$.
\end{proposition}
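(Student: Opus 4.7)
Plan: The plan is to establish the vanishing via a projective-dimension bound, and the $S$-freeness by transferring Corollary~4.1 to the relative setting, exploiting that both $H_1({\bf F})$ and the canonical module of its fibre behave well under $S$-flat deformation.

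\textbf{Step 1 (Vanishing).} Pick a minimal free $R$-resolution $F_\bullet \to H_0({\bf F})$. Since $R$ and $H_0({\bf F})$ are both $S$-flat, $\textup{Tor}^R_i(H_0({\bf F}), P) = \textup{Tor}^S_i(H_0({\bf F}), K) = 0$ for $i > 0$, so $F_\bullet \otimes_S K$ is a free resolution of $H_0({\bf f}) = P/J$ over $P$; minimality is preserved after reducing modulo $\m_S$, so $F_\bullet \otimes_S K$ is the minimal $P$-resolution of $P/J$. Because $P$ is regular of dimension $n$, this minimal resolution has length at most $n$, and hence $F_k = 0$ for $k > n$, forcing $\textup{Ext}^k_R(H_0({\bf F}), R) = 0$ for $k > n$.

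\textbf{Step 2 ($S$-freeness).} The Tor spectral sequence $\textup{Tor}^S_p(H_q({\bf F}), K) \Rightarrow H_{p+q}({\bf f})$ for the Koszul complex (a complex of $S$-flat modules), combined with the vanishing $H_k({\bf f}) = 0$ for $k \geq 2$ and the $S$-flatness of $H_0({\bf F})$, yields by induction on $k$ both $H_k({\bf F}) = 0$ for $k \geq 2$ and $\textup{Tor}^S_1(H_1({\bf F}), K) = 0$ (the $(1,1)$-entry of $E^2$ has no non-trivial differentials, as the adjacent positions involve Tor of the $S$-flat $H_0({\bf F})$), so $H_1({\bf F})$ is $S$-flat. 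The Koszul self-duality of Proposition~3.1 together with Corollary~3.4(ii) then apply verbatim over $R$, giving $\textup{Ext}^{n-1}_R(H_0({\bf F}), R) \cong H_1({\bf F})$ and, combined with the vanishing $\textup{Ext}^{n+1}_R(H_0({\bf F}), R) = 0$ from Step~1, the short exact sequence
\[
0 \to \textup{Ext}^n_R(H_0({\bf F}), R) \to H_0({\bf F}) \to \textup{Ext}^{n-1}_R(H_1({\bf F}), R) \to 0.
\]
Since $H_1({\bf F})$ is $S$-flat with Cohen-Macaulay fibre $H_1({\bf f})$ of dimension one, $H_1({\bf F})$ is itself Cohen-Macaulay over $R$ by the standard preservation of Cohen-Macaulayness under $S$-flat deformation with CM fibre; Ischebeck's lemma then concentrates $\textup{Ext}^k_R(H_1({\bf F}), R)$ in the single degree $n-1$, and the canonical module $\textup{Ext}^{n-1}_R(H_1({\bf F}), R)$ inherits $S$-flatness (canonical modules of $S$-flat CM modules are $S$-flat). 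Applying the Tor long exact sequence to the displayed short exact sequence, whose middle and right-hand terms are $S$-flat, then forces $\textup{Tor}^S_1(\textup{Ext}^n_R(H_0({\bf F}), R), K) = 0$. Thus $\textup{Ext}^n_R(H_0({\bf F}), R)$ is $S$-flat, and being finitely generated as an $R$-module over the Noetherian local ring $S$, it is $S$-free.

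The main technical obstacle is the deformation-theoretic claim that Cohen-Macaulayness and the formation of the canonical module commute with the $S$-flat base change in our setting; these are standard but non-trivial facts from the theory of CM modules in flat families (cf. Bruns-Herzog), relying ultimately on the constancy of depth in a flat family with constant-depth fibre.
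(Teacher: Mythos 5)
Your proof is correct in outline and reaches the right conclusions, but it travels a genuinely different road from the paper. For the vanishing $Ext^k_R(H_0({\bf F}),R)=0$, $k\ge n+1$, you bound the projective dimension of $H_0({\bf F})$ over $R$ by $n$ using a minimal free resolution and the fact that it specializes (by $S$-flatness of $R$ and of $H_0({\bf F})$) to the minimal $P$-resolution of $P/J$; the paper instead reduces ``for simplicity'' to $S=K[[t]]$, applies $Hom_R(H_0,-)$ to $0\to R\stackrel{t\cdot}{\to}R\to P\to 0$, and kills the high $Ext$'s by Nakayama. Your argument is cleaner and is not tied to a one-parameter base, at the cost of assuming $R$ is local Noetherian so that minimal resolutions exist. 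For the $S$-freeness, both proofs hinge on the same hidden fact --- that $H_1({\bf F})$ is automatically $S$-flat --- but exploit it differently: the paper shows the connecting map $\overline{E}^{n-1}\to E^n$ vanishes, so that $t$ acts injectively on $E^n=Ext^n_R(H_0({\bf F}),R)$, while you feed the flatness of $H_1({\bf F})$ through Cohen--Macaulayness in flat families to get $S$-flatness of $Ext^{n-1}_R(H_1({\bf F}),R)$ and then read off flatness of the kernel from the short exact sequence of Corollary \ref{koszhom}(ii) over $R$. That is valid, but it imports noticeably more machinery (base change for canonical modules of CM modules in flat families) than the paper's elementary $\times t$ bookkeeping.

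One step does not hold up as written: the claimed induction via the Tor spectral sequence $Tor^S_p(H_q({\bf F}),K)\Rightarrow H_{p+q}({\bf f})$ proving $H_k({\bf F})=0$ for $k\ge 2$. The vanishing of the abutment in total degree $k\ge 2$ only tells you that $E^\infty_{0,k}=0$, and $E^\infty_{0,k}$ is the quotient of $H_k({\bf F})\otimes_S K$ by the images of the differentials $d^r:E^r_{r,k-r+1}\to E^r_{0,k}$, whose sources involve $Tor^S_r$ of the lower, not-yet-controlled homology modules; so you cannot conclude $H_k({\bf F})\otimes_S K=0$ from the spectral sequence alone. The fix is the one the paper uses: since $H_0({\bf F})$ is $S$-flat with one-dimensional fibre, $J_S$ has grade $n-1$ in $R$, so ${\bf F}$ is an almost complete intersection in $R$ and Proposition \ref{vanishing} gives $H_k({\bf F})=0$ for $k\ge 2$ directly. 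Once that is granted, your spectral sequence has only two nonzero rows, degenerates at $E_2$, and does correctly yield $Tor^S_p(H_1({\bf F}),K)=0$ for $p\ge 1$, so the rest of your Step 2 goes through. A last cosmetic point: flatness plus finite generation over $R$ does not by itself give $S$-freeness; you also need that $M_S$ is finite over $S$, which follows because its special fibre $I/J$ is a finite-dimensional $K$-vector space.
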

\begin{proof} This follows from a cohomology-and-base change argument. We
assume for simplicity $S=K[[t]]$, so that we have an exact 
sequence $0\lra R \stackrel{t\cdot}{\lra} R \lra P \lra 0$.   
As ${\bf F}$ defines an almost complete intersection in $R$, we have as before
only two Koszul groups $H_0:=H_0({\bf F})$ and $H_1:=H_1({\bf F})$, and by 
assumption we have an exact sequence
\[ 0 \lra H_1 \stackrel{t\cdot}{\lra} H_1 \lra \overline{H}_1 \stackrel{0}{\lra} H_0 \stackrel{t\cdot}{\lra} H_0 \lra \overline{H}_0 \lra 0\] 
where $\overline{H}_k:=H_k({\bf f})$. Furthermore, we have the statements 
from \ref{aci}. We will show that $Ext^{n+1}_R(H_0,R)=0$, as are all further 
higher $Ext^{k}_R(H_1,R) \cong Ext^{k+2}_R(H_0,R)$, $k \ge n$. For this, note that one obtains a long exact sequence
\[ \ldots \lra E^{n+1} \stackrel{t\cdot}{\lra} E^{n+1} \lra \overline{E}^{n+1}\lra E^{n+2} \stackrel{t\cdot}{\lra} E^{n+2} \lra \overline{E}^{n+2}\lra \ldots\] 
where we put temporarily $E^k:=Ext^k_R(H_0,R)$, $\overline{E}^k:=Ext_P(\overline{H}_0,P)$. As $\overline{E}^k=0$ for $k \ge n+1$ and the modules are $S$-finite,
one concludes with Nakayama that all $E^{k}=0,k \ge n+1$. As $H_1=E^{n-1}$ it follows that we have an exact sequence
\[ 0  \lra E^{n-1} \stackrel{t\cdot}{\lra} E^{n-1} \lra \overline{E}^{n-1} \stackrel{0}{\lra} E^{n} \stackrel{t \cdot}{\lra} E^{n} \lra \overline{E}^{n}\lra 0\]
hence we find that $E^n=Ext_P^n(H_0({\bf F}),P)$ and $S$-flat, and hence $S$-free. 
\end{proof}

As in \ref{koszhom} we have
\[Ext^{n-1}_R(H_0({\bf F},R))=H_1({\bf F}),\;\;\;Ext^n_R(H_1({\bf F}),R)=0\]
\[0 \lra Ext_R^n(H_0({\bf F}),R) \lra H_0({\bf F})\lra Ext^{n-1}_R(H_1({\bf F}),R) \lra 0\]

\begin{definition} \label{relativemodule} 
In the above situation we put
\[M_S:=Ext^n_R(H_0({\bf F}),R)\]
\end{definition}

As $H_0({\bf F})=R/J_S$, we see as before from the exact sequence that
there exists an ideal $I_S \subset R$ such that $M_S=I_S/J_S$. Completely 
analoguous to \ref{isoextij} we have

\begin{proposition}\label{isoextrelativ} 
\[ Ext^n_R(M_S,R)=M_S\]
\end{proposition}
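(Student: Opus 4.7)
The plan is to transplant the proof of Proposition \ref{isoextij} to the relative setting, using the vanishing results and short exact sequence displayed just before the statement of \ref{isoextrelativ} in place of their absolute analogues. Throughout, I would take $S = K[[t]]$, as in the proof of \ref{flat}, so that $R$ is regular local of dimension $n+1$.

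The first step is to extract the structural properties of $R/I_S$. From the short exact sequence $0 \to M_S \to R/J_S \to R/I_S \to 0$ together with the $S$-freeness of $M_S$ (Proposition \ref{flat}) and the $S$-flatness of $R/J_S = H_0({\bf F})$ (hypothesis), I conclude that $R/I_S$ is $S$-flat. Tensoring with $K$ identifies its fibre with the ring $P/I$ of the absolute situation, which by \ref{saturation} is Cohen-Macaulay of dimension one. Hence $R/I_S$ is Cohen-Macaulay of dimension two in $R$, i.e.\ of codimension $n-1$. In particular $Ext^k_R(R/I_S, R) = 0$ for $k \neq n-1$.

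Second, I apply $Hom_R(-, R)$ to $0 \to M_S \to R/J_S \to R/I_S \to 0$. The module $M_S$ is supported on the one-dimensional locus obtained by spreading the $\m$-primary torsion of $P/J$ over $S$; this support has codimension $n$ in $R$, so Ischebeck's lemma yields $Ext^k_R(M_S, R) = 0$ for $k < n$. Feeding this and the Cohen-Macaulay vanishing for $R/I_S$ into the long exact sequence produces the relative analogues of \ref{extiso}:
\[
Ext^{n-1}_R(R/I_S, R) \cong Ext^{n-1}_R(R/J_S, R), \qquad Ext^n_R(R/J_S, R) \cong Ext^n_R(M_S, R).
\]

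Finally, Cohen-Macaulay reflexivity for $R/I_S$ gives $Ext^{n-1}_R(Ext^{n-1}_R(R/I_S, R), R) \cong R/I_S$. Combined with $H_1({\bf F}) \cong Ext^{n-1}_R(R/J_S, R) \cong Ext^{n-1}_R(R/I_S, R)$, obtained from the first isomorphism above together with \ref{flat}, this yields $Ext^{n-1}_R(H_1({\bf F}), R) \cong R/I_S$. Substituting into the exact sequence from \ref{flat} identifies $Ext^n_R(R/J_S, R) = \ker(R/J_S \to R/I_S) = M_S$, and composing with the second isomorphism above produces $Ext^n_R(M_S, R) \cong M_S$. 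The main obstacle is the Cohen-Macaulay property of $R/I_S$ in the relative setting; once its $S$-flatness is extracted from the defining sequence, one invokes the standard fact that a module flat over a regular local base with Cohen-Macaulay fibres is itself Cohen-Macaulay, and everything else is bookkeeping parallel to the absolute case.
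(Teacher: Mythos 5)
Your proposal follows the route the paper itself intends: the paper's ``proof'' of \ref{isoextrelativ} consists of the single remark that it is identical to that of \ref{isoextij}, and your transplantation of that argument to $R$, using the vanishing statements and the exact sequence displayed after \ref{flat}, is exactly that reading; the final bookkeeping (dualising the sequence $0 \to M_S \to R/J_S \to R/I_S \to 0$, Ischebeck for $M_S$, reflexivity for $R/I_S$) is correct. One step, however, is not justified as written: you infer the $S$-flatness of $R/I_S$ from the flatness of the submodule $M_S$ and of the middle term $R/J_S$. That inference is invalid in general (the sequence $0 \to tS \to S \to S/tS \to 0$ is the standard counterexample); what is actually needed is $Tor_1^S(R/I_S,K)=0$, i.e.\ the injectivity of $M_S\otimes_S K \lra (R/J_S)\otimes_S K$, which does hold here because the base-change argument in \ref{flat} identifies $M_S\otimes_S K$ with $M_0=I/J=Ext^n_P(H_0({\bf f}),P)$ and the induced map with the inclusion $I/J\hookrightarrow P/J$. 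Alternatively, you can bypass the flatness of $R/I_S$ entirely and obtain its Cohen--Macaulayness the way the paper does in the absolute case (\ref{saturation}): the vanishing $Ext^k_R(H_1({\bf F}),R)=0$ for $k\neq n-1$, which follows from \ref{aci}(iii) together with the vanishing of $Ext^k_R(H_0({\bf F}),R)$ for $k\geq n+1$ proved in \ref{flat}, makes $H_1({\bf F})$ a Cohen--Macaulay $R$-module of codimension $n-1$, hence so is its dual $R/I_S\cong Ext^{n-1}_R(H_1({\bf F}),R)$, and the reflexivity isomorphism $Ext^{n-1}_R(Ext^{n-1}_R(R/I_S,R),R)\cong R/I_S$ then applies verbatim. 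With either repair your argument is a correct and complete version of the proof the paper omits.
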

\begin{proof}
We omit the proof, that is identical to that of \ref{isoextij}.
\end{proof}
%

However, we want to understand this duality in terms of a {\em family of 
pairings}, parametrised by $S$. We can not just apply the local duality 
theorem, but rather we would  have to use the duality statement for the 
morphism $Spec(R) \lra Spec(S)$.\\
 
To treat this in an elementary way, we express higher $Ext$-groups $Ext_R^i(M,R)$ as groups $Hom_{\overline{R}}(M,\overline{R})$ of homomorphisms, where $\overline{R}$ arises from $R$ by dividing out suitable elements. In this way
we can reduce to the case of a finite ring extension and use the duality 
for a finite map, a change-of-rings isomorphism.  First we recall

\begin{theorem}\label{exthom}
Let $R$ be a ring and let  $M$, $N$ be two non-trivial finite $R$-modules.
If $\textup{ann}(M)+\textup{ann}(N)=R$, $Ext_R^i(M,N)$ is zero for all $i\in\N$. Otherwise 
$$d=\textup{depth}(\textup{ann}(M),N)$$ 
is the smalles number $i$ with  $Ext^i_R(M,N)$ not equal to zero.
If $t_1,\ldots,t_d$ is a maximal regular $N$-sequence in $\textup{ann}(M)$
and we define
$$\overline{R}=R/(t_1,\ldots,t_d), \overline{N}=N/(t_1,\ldots,t_d)N,$$
then there is an isomorphism
$$
Ext^d_R(M,N) \cong Hom_{\overline{R}}(M,\overline{N}).
$$
\end{theorem}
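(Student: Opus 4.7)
I would split the argument into the trivial case and an induction on the depth $d$. When $\textup{ann}(M)+\textup{ann}(N)=R$, write $1=a+b$ with $aM=0$ and $bN=0$; the endomorphism of $\Ext_R^i(M,N)$ induced by $a$ is then simultaneously zero (by functoriality in the first variable, since $aM=0$) and the identity (since $a=1-b$ and $b$ acts as $0$ on $N$). Hence every $\Ext_R^i(M,N)$ vanishes.

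The heart of the argument is an inductive reduction. For any $t\in\textup{ann}(M)$ that is $N$-regular, the long exact sequence of $\Ext_R(M,-)$ applied to $0\lra N\stackrel{t}{\lra} N\lra N/tN\lra 0$ has all of its multiplication-by-$t$ maps trivial (since $t$ annihilates $M$ and hence every $\Ext_R^i(M,-)$), so it breaks into short exact sequences
\[ 0\lra \Ext_R^i(M,N)\lra \Ext_R^i(M,N/tN)\lra \Ext_R^{i+1}(M,N)\lra 0 \]
for every $i\ge 0$. The $i=0$ instance already yields $\Hom_R(M,N)=0$. Taking $t=t_1$ and using that $t_2,\ldots,t_d$ is a maximal $N/t_1N$-regular sequence in $\textup{ann}(M)$, the depth of $N/t_1N$ drops to $d-1$; by the inductive hypothesis the lower Ext groups over $N/t_1N$ vanish, and the displayed sequences then force $\Ext_R^i(M,N)=0$ for $i<d$ and identify $\Ext_R^d(M,N)$ with $\Ext_R^{d-1}(M,N/t_1N)$. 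Iterating the reduction $d$ times gives $\Ext_R^d(M,N)\cong \Hom_R(M,\overline{N})$, and since $(t_1,\ldots,t_d)\subset\textup{ann}(M)$ annihilates both $M$ and $\overline{N}$, the forgetful map $\Hom_{\overline{R}}(M,\overline{N})\to\Hom_R(M,\overline{N})$ is an equality, giving the claimed isomorphism.

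What remains is the non-vanishing at $i=d$, which by the iso boils down to showing $\Hom_R(M,N)\neq 0$ in the base case $d=0$. Here $\textup{ann}(M)$ contains no $N$-regular element, so the union $\bigcup_{\mathfrak{p}\in\textup{Ass}(N)}\mathfrak{p}$ of zero-divisors on $N$ absorbs $\textup{ann}(M)$; by prime avoidance $\textup{ann}(M)\subset\mathfrak{p}$ for some $\mathfrak{p}\in\textup{Ass}(N)$. The embedding $R/\mathfrak{p}\hookrightarrow N$ together with $\mathfrak{p}\in\textup{Supp}(M)$ (and Nakayama applied to the finitely generated $M_{\mathfrak{p}}$ to produce a surjection onto the residue field) yields a nonzero map $M_{\mathfrak{p}}\to N_{\mathfrak{p}}$, which globalises to $\Hom_R(M,N)\neq 0$ by finite presentation. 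This base case is the main obstacle: the long-exact-sequence manipulations and change-of-rings identifications are essentially formal, but the non-vanishing genuinely invokes associated primes, prime avoidance, and the finite generation of $M$.
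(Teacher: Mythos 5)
Your argument is correct and follows the same route the paper takes: the paper simply cites Bruns--Herzog 1.2.10 for the depth-sensitivity of $Ext$ and invokes ``induction from the long exact Ext-sequence, obtained by dividing out an element,'' which is precisely your reduction via the short exact sequences $0\to Ext^i_R(M,N)\to Ext^i_R(M,N/tN)\to Ext^{i+1}_R(M,N)\to 0$. You merely write out in full the standard ingredients the paper leaves to the reference (the $1=a+b$ trick, the prime-avoidance non-vanishing at $i=d$, and the change-of-rings identification of $Hom$), all of which are sound.
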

\begin{proof} This is well-known. The first part of the statement is essentially
\cite{brunsherzog}, 1.2.10 and the rest follows by induction from the long
exact Ext-sequence, obtained by dividing out an element.
\end{proof}

\begin{theorem} \label{pairingfamily} An isomorphism $\phi: R \stackrel{\cong}{\lra} \omega_{R/S}$ 
defines a bilinear pairinng
\[B_{\phi}: M_S \times M_S \lra S\]
This pairing is non-degenerate in the sense that the adjoint map
\[M_S \cong Hom_S(M_S,S)\]
is an isomorphism of $S$-modules.
\end{theorem}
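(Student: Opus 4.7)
The plan is to upgrade the absolute Ext-duality $\Ext^n_R(M_S,R) \cong M_S$ of Proposition \ref{isoextrelativ} to a duality between $M_S$ and its $S$-linear dual $\Hom_S(M_S,S)$, by first passing from the dualising module of $R$ to the relative dualising module $\omega_{R/S}$ via $\phi$, and then using the change-of-rings device of Theorem \ref{exthom} to reduce Ext to Hom over a finite $S$-algebra, where duality is concrete.

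First, the chosen isomorphism $\phi: R \stackrel{\cong}{\lra} \omega_{R/S}$ transports Proposition \ref{isoextrelativ} into
\[ M_S \;\cong\; \Ext^n_R(M_S,R) \;\cong\; \Ext^n_R(M_S,\omega_{R/S}), \]
so the task reduces to identifying the right-hand side with $\Hom_S(M_S,S)$. Since $M_S$ is $S$-free and $S$-finite by Proposition \ref{flat} (its fibre $I/J$ being artinian), the support of $M_S$ in $\textup{Spec}(R)$ is finite over $\textup{Spec}(S)$, so $\textup{ann}_R(M_S)$ has height $n$ in $R$. As $\omega_{R/S}$ is Cohen--Macaulay of the same dimension as $R$, one can select a sequence $t_1,\ldots,t_n \in \textup{ann}_R(M_S)$ that is regular on $\omega_{R/S}$, and Theorem \ref{exthom} then delivers
\[ \Ext^n_R(M_S,\omega_{R/S}) \;\cong\; \Hom_{\overline{R}}(M_S,\overline{\omega}_{R/S}), \]
where $\overline{R}=R/(t_1,\ldots,t_n)$ is finite over $S$ and $\overline{\omega}_{R/S}$ is the induced quotient of $\omega_{R/S}$.

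For a finite flat $S$-algebra such as $\overline{R}$ the relative dualising module takes the concrete form $\overline{\omega}_{R/S} \cong \Hom_S(\overline{R},S)$, hence Hom-tensor adjunction yields
\[ \Hom_{\overline{R}}(M_S,\Hom_S(\overline{R},S)) \;\cong\; \Hom_S(M_S,S). \]
Composing the chain of isomorphisms produces the required $S$-linear isomorphism $M_S \cong \Hom_S(M_S,S)$, and its adjoint is the pairing $B_\phi$.

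The main obstacle is setting up $\omega_{R/S}$ in this elementary framework and, in particular, establishing $\overline{\omega}_{R/S} \cong \Hom_S(\overline{R},S)$ in a way that matches the trace built into $\phi$ with the one furnished by the finite extension $S \to \overline{R}$; this is essentially Grothendieck's duality for a finite flat morphism. A secondary check is compatibility with base change: reducing modulo $\m_S$ should recover the absolute pairing of Theorem \ref{pairing}, and this specialisation together with the $S$-flatness of $M_S$ ensures that the adjoint is an isomorphism rather than merely injective.
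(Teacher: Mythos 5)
Your proposal follows essentially the same route as the paper: reduce $\Ext^n_R(M_S,-)$ to a $\Hom$ over the finite $S$-algebra $\overline{R}=R/(t_1,\ldots,t_n)$ via the change-of-rings result of Theorem \ref{exthom}, identify the reduced dualising module with $\Hom_S(\overline{R},S)$ by duality for the finite map, and conclude by Hom-tensor adjunction; the only cosmetic difference is that you insert $\phi$ before reducing modulo the regular sequence, whereas the paper reduces first and then identifies $\overline{R}\cong\omega_{\overline{R}/S}$. The argument is correct and matches the paper's proof.
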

\begin{proof} 
From \ref{isoextrelativ} there is is an isomorphism
$M_S \cong Ext^n_R(M_S,R)$. We take a maximal regular sequence 
$t_1,\ldots,t_d$ in the annihilator of $M_S$. We divide out these 
elements and obtain a factor ring $\overline{R}$ of $R$ and applying
\ref{exthom} we get an isomorphism 
$Ext^n_R(M_S,R)= Hom_{\overline{R}}(M_S,\overline{R})$. As $M_S$ is a finite 
$S$-module, the ring $\overline{R}$ is finite over $S$. From the isomorphism 
$\phi: R \stackrel{\cong}{\lra} \omega_{R/S}$ we obtain by dividing out 
$t_1,\ldots,t_d$ an isomorphism $\overline{R} \cong \omega_{\overline{R}/S}$.
Duality for the finite map $S \lra \overline{S}$ tells us
$\omega_{\overline{R}/S}=Hom_S(\overline{R},S)$. From the change-of-rings 
isomorphism we get
$$
Hom_{\overline{R}}\left(M_S,Hom_S\left(\overline{R},S\right)\right) \cong Hom_{S}(M_S,S).
$$
Combining these isomorphisms we obtain
\[Ext_R^n(M_S,R)\cong Hom_{\overline{R}}(M_S,\overline{R})\cong Hom_S(M_S,S)\]
Hence, in total we obtain a natural isomorphism 
$M_S\cong Hom_S\left(M_S,S\right)$, which can be seen as a family of 
non-degenerate pairings $B_{\phi}: M_S\times M_S\lra S.$
\end{proof}

In the case $S=K[[t]]$ we obtain a commutative diagram of the following form
\[
\begin{array}{ccccccccc}
0&\lra&M_S          &\stackrel{t\cdot}{\lra}&M_S            &\lra& M_0          &\lra&0\\
 &    &\uparrow\cong&                        &\uparrow \cong&    &\uparrow \cong&    &\\
0&\lra&Ext^n_R(M_S,R)&\stackrel{t\cdot}{\lra}&Ext^n_R(M_S,R)      &\lra& Ext^n_P(M_0,P)&\lra&0\\
 &    &\uparrow\cong&                        &\uparrow \cong&    &\uparrow \cong&    &\\
0&\lra&Hom_S(M_S,S)  &\stackrel{t\cdot}{\lra}&Hom_S(M_S,S)   &\lra& Hom_K(M_0,K) &\lra&0\\
\end{array}
\]

\section{Application to  non-isolated hypersurface singularity 
with one-dimensional critical locus}

\subsection{Hypersurfaces with one-dimensional singular locus}

Following \cite{pellikaan1}, the {\em primitive} of an ideal 
$I \subset P:=\C\{x_1,\ldots,x_n\}$ is the ideal
\[\int I:=\{f \in P\;|\;(f,\partial_1f,\ldots,\partial_nf)\subset I\}\]
This ideal arises when studying functions $f$ which contain a
specific sub-space $\Sigma:=V(I)$ inside their critical locus. 
One can pursue the classification program of singularity theory in this context
and Siersma \cite{siersma} started the investigation of 
{\em line-singularities}, which correspond to the case where $I$ is a radical 
ideal defining a line. 
The {\em extended $I$-codimension} of
 a function $f \in \int I$ is defined as
\[c_{e,I}(f):=dim_{\C}(\int I/\int I \cap J_f)\]
and one can try to classify the cases of low codimension. Here $J_f=(\partial_1f,\ldots\partial_nf)$ is the jacobian ideal of $f$, \cite{pellikaan1}.
As $J_f \subset I$, one can consider the associated {\em jacobian module} 
$I/J_f$.

We will assume from now on that $I$ is a radical ideal, 
defining a curve germ $\Sigma \subset (\C^n,0)$. On has the following basic
result.

\begin{proposition} (Pellikaan, \cite{pellikaan1}, prop. 1.7)
The following statements about a function $f \in \int I$
with jacobian ideal $J_f$ are equivalent:\\
1) $\dim(I/J_f) <\infty$\\
2) $c_{e,I}(f) < \infty$\\
3) The singular locus of $f$ is $\Sigma$ and $f$ has only $A_1$-singularities
transverse to $\Sigma\setminus\{0\}$.
\end{proposition}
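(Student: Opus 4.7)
The plan is to reduce all three conditions to pointwise statements on a small punctured neighborhood of $0\in(\C^n,0)$: in the Noetherian local ring $P$, a finitely generated module has finite $\C$-dimension iff it is supported only at $\m$, i.e.\ iff every localization $M_p$ with $p\neq\m$ vanishes. In particular (1) asserts $(I/J_f)_p=0$ and (2) asserts $(\int I/(\int I\cap J_f))_p=0$ for all such $p$, while (3) is already phrased pointwise. The analysis then splits into the two cases $p\notin\Sigma$ and $p\in\Sigma\setminus\{0\}$.

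At $p\notin\Sigma$ the ideal $I$ localizes to the unit ideal, hence so does $\int I\subset I$ once one checks $(\int I)_p=\mathcal{O}_p$ whenever $p\notin V(I)$. Both (1) and (2) therefore collapse locally to $(J_f)_p=\mathcal{O}_p$, i.e.\ to $p$ being a regular point of $f$; summing over all such $p$, this is exactly the inclusion $\mathrm{Sing}(f)\subset\Sigma$ demanded by (3).

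For $p\in\Sigma\setminus\{0\}$, radicality of $I$ makes $\Sigma$ smooth at $p$, so one may pick coordinates $(x_1,\ldots,x_{n-1},y)$ with $I_p=(x_1,\ldots,x_{n-1})$. A direct Taylor-expansion computation gives $(\int I)_p=I_p^2$, and $f\in\int I$ forces
\[f(x,y)=\sum_{1\le i,j<n}q_{ij}(y)\,x_ix_j+O(|x|^3).\]
Reducing the generators $\partial_i f$ of $(J_f)_p$ modulo $I_p^2$ produces the linear forms $2\sum_j q_{ij}(y)x_j$, which span $I_p/I_p^2$ over $\mathcal{O}_{\Sigma,p}$ iff the transverse Hessian $(q_{ij}(p))$ is non-degenerate. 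By Nakayama this is equivalent to $(J_f)_p=I_p$, which is in turn the transverse $A_1$ condition on $f$ at $p$. This establishes (1)$\Leftrightarrow$(3); since in the transverse $A_1$ case $I_p^2\subset I_p=(J_f)_p$, it also gives (3)$\Rightarrow$(2).

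The main obstacle I expect is the remaining implication (2)$\Rightarrow$(3): one has to recover the full equality $(J_f)_p=I_p$ from the a priori weaker containment $I_p^2\subset(J_f)_p$ combined with $(J_f)_p\subset I_p$. My plan is to examine the $\mathcal{O}_{\Sigma,p}$-cokernel of the symmetric matrix $(q_{ij}(y))$ acting on $I_p/I_p^2\cong\mathcal{O}_{\Sigma,p}^{\,n-1}$: if $(q_{ij}(p))$ drops rank, a null-vector $v$ furnishes a candidate quadratic element $(\sum v_i x_i)^2\in I_p^2$, and one aims to show, after a normal-form reduction of $f$ adapted to the kernel of the transverse Hessian, that some such element fails to lie in $(J_f)_p$, contradicting (2). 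This step is the technical heart of the proposition and will likely rely on the more refined classification of transverse degenerations developed in \cite{pellikaan1}; once it is in hand, the chain (3)$\Rightarrow$(1)$\Rightarrow$(2)$\Rightarrow$(3) closes the equivalence.
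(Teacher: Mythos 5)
The paper gives no proof of this proposition --- it is quoted verbatim from Pellikaan \cite{pellikaan1} --- so there is nothing internal to compare your argument with; I can only assess it on its own terms. Your localization framework and your proof of $(1)\Leftrightarrow(3)$ are correct and standard: off $\Sigma$ both finiteness conditions reduce to $(J_f)_p=\mathcal{O}_p$, and at a smooth point $p$ of $\Sigma$ the identities $(\int I)_p=I_p^2$ and $\partial_i f\equiv 2\sum_j q_{ij}(y)x_j \bmod I_p^2$, combined with Nakayama, identify $(J_f)_p=I_p$ with non-degeneracy of the transverse Hessian. For $(1)\Rightarrow(2)$ you do not even need to localize: since $\int I\subset I$ and $J_f\subset I$, one has $\int I/(\int I\cap J_f)\cong(\int I+J_f)/J_f\subset I/J_f$.

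The step you flag as the ``technical heart'', $(2)\Rightarrow(3)$, is however not a gap you can close: with the definition of $c_{e,I}$ as printed in this paper, namely $\dim\int I/(\int I\cap J_f)$, that implication is \emph{false}, so the element of $I_p^2\setminus(J_f)_p$ you hope to produce need not exist. Take $I=(x,y)\subset\C\{x,y,z\}$ and $f=x^2z+y^3\in I^2=\int I$. The transverse type along the $z$-axis is $A_2$ at every point, and $\dim(I/J_f)=\infty$ since the classes of $yz^k$ survive in $(x,y)/(xz,y^2,x^2)$; yet $\int I\cap J_f=(x^2,y^2,xyz)$, so $c_{e,I}(f)=\dim(x^2,xy,y^2)/(x^2,y^2,xyz)=1<\infty$. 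Even simpler, $I=(x)$, $f=x^3$ in two variables gives $\int I=J_f=(x^2)$, hence $c_{e,I}(f)=0$ while $\dim(I/J_f)=\dim(x)/(x^2)=\infty$. The statement can therefore only be correct for a smaller ideal than $\int I\cap J_f$; in Pellikaan's deformation theory the natural candidate is the extended tangent space $\{\xi(f):\xi\in\Theta,\ \xi(I)\subset I\}$ to the orbit of $\Sigma$-preserving right equivalences, and for that ideal both examples above do yield infinite codimension (e.g.\ for $f=x^2z+y^3$ it equals $(x^2,xyz,xy^2,y^3)$ and the classes $y^2z^k$ survive). So your plan for $(2)\Rightarrow(3)$ has to be replaced by an argument using the logarithmic vector fields along $\Sigma$ rather than $\int I\cap J_f$, and the proposition as quoted should be read with that corrected definition of $c_{e,I}$.
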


Note that in this situation $I=rad(J_f)$, which is the same as the saturation
of $J_f$ with respect to $\m$. So the jacobian module $I/J_f$ is precisely the
module considered in \ref{imodj} for the sequence $f_1,f_2,\ldots,f_n$,
with $f_i=\partial f/\partial x_i$.\\
 
There is a formula expressing  $\dim I/J$ in terms of $c_{e,I}$ and
some other invariant that we explain now. The two dual exact sequences
\[ I/I^2 \stackrel{d}{\lra} \Omega \otimes \mathcal{O}_{\Sigma} \lra \Omega_{\Sigma} \lra 0\]
\[ 0 \lra \Theta_{\Sigma} \lra \Theta \otimes \mathcal{O}_{\Sigma} \stackrel{d^*}{\lra} Hom(I/I^2,\mathcal{O}_{\Sigma})\]
provide important invariants of the curve $\Sigma$.\\ 

$\bullet$ The first tangent homology is $T_1(\Sigma)=ker(d)=\int I/I^2$.
In case $I$ is a reduced complete intersection, one has $\int I=I^2$. But
for $I=(xy,yz,zx) \subset \C\{x,y,z\}$ one has $\int I=(xyz,I^2)$, so $\int I/I^2$ is one-dimensional.\\

$\bullet$ The first tangent cohomology if $T^1(\Sigma)=Coker(d^*)$. It is the
space of first order infinitesimal deformations of $\Sigma$. The normal module
 $N=Hom_{\Sigma}(I/I^2,{\cal O}_{\Sigma})$ is identified with the space of embedded deformations of $\Sigma$.\\

$\bullet$ Dualising once more, we obtain a double duality map
$ I/I^2 \lra N^*$, where $N^*:=Hom_{\Sigma}(N,P/I)$. The kernel is again $\int I/I^2$, the cokernel $N^*/I:=N^*/(I/\int I)$ is a further invariant. For $I=(xy,xz,zx)$ one finds $\dim(N^*/I)=3$.\\

$\bullet$ In the special where $\Sigma \subset (\C^3,0)$ is a space curve, 
$I$ is Cohen-Macaulay of codimension $2$. Such curves are syzygetic ($T_2(\Sigma)=0$) and unobstructed ($T^2(\Sigma)=0$). Furthermore, one has in that case
\[ \int I/I^2=Ext^1({\cal O}_{\Sigma},{\omega_{\Sigma}}),\;\;\; N^*/I=Ext^2({\cal O}_{\Sigma},{\omega_{\Sigma}})\]

$\bullet$ In \cite{dJ} and \cite{dJdJ} an invariant $VD_{\infty}(f)$ called
the virtual number of $D_{\infty}$-points was defined.
In case that $f \in I^2$ Pellikaan \cite{pellikaan2} (and more generally in \cite{dJvS1}), the expression of $f$ as a quadratic form in the generators of $I$ 
can be used to define a {\em transverse Hessian map}
\[H: N \times N \lra {\cal O}_{\Sigma}\] 
which defines by transposition a map
\[h:  N \lra N^*\]
which has a finite cokernel in case $f$ has transverse $A_1$-singularities.

\begin{theorem} (\cite{pellikaan2}, \cite{dJvS1}) Let the radical ideal
define a curve $\Sigma$ and let $f \in \int I$ have transverse type $A_1$.
Assume furthermore that $\Sigma$ is smoothable, with $T_2(\Sigma)=T^2(\Sigma)=0$. Then:
\[\dim(I/J_f)=c_{e,I}(f)+\dim(T^1(\Sigma))+VD_{\infty}(f)-\dim(\int I/I^2)\]
\[VD_{\infty}(f)=\dim(N/h(N))-\dim(N^*/I)+\dim(\int I/I^2)\]
\end{theorem}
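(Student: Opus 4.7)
The plan is to combine two ingredients: a filtration argument that isolates $c_{e,I}(f)$ inside $I/J_f$, and a careful analysis of the transverse Hessian $h$ that relates the remaining quotient to the normal-module invariants of $\Sigma$. Throughout, the hypothesis of transverse type $A_1$ ensures all relevant modules are finite-dimensional (by Pellikaan's criterion quoted just above), and the Cohen--Macaulay/unobstructedness conditions $T_2(\Sigma)=T^2(\Sigma)=0$ together with smoothability are exactly what is needed to close the relevant diagrams without stray $\Ext^1$/$Tor_1$ terms.

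The first step is to use the inclusion chain $J_f \subset J_f + \int I \subset I$, which yields the short exact sequence
\[0\lra (J_f+\int I)/J_f \lra I/J_f \lra I/(J_f+\int I)\lra 0.\]
The leftmost term equals $\int I/(\int I\cap J_f)$, which by definition is $c_{e,I}(f)$. So the first formula reduces to identifying
\[\dim\bigl(I/(J_f+\int I)\bigr)=\dim T^1(\Sigma)+VD_{\infty}(f)-\dim(\int I/I^2).\]

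The second step is to interpret $J_f$ modulo $\int I$ via the transverse Hessian. Writing $f$ as a quadratic form $f\equiv\sum a_{ij}g_ig_j$ in a chosen set of generators $g_1,\ldots,g_r$ of $I$ (legal because $f\in\int I$ has transverse type $A_1$, so the normal part of $f$ is non-degenerate along $\Sigma\setminus\{0\}$), one finds $\partial_k f\equiv\sum_{i,j}a_{ij}\,g_i\,\partial_k g_j\pmod{\int I}$. This is exactly the composite $\Theta_P\otimes\mathcal{O}_\Sigma\xrightarrow{d^*} N\xrightarrow{h}N^*$ followed by evaluation $N^*\to I/(\int I\cdot I)$. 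I would then assemble the defining sequence of $T^1(\Sigma)$ together with $h:N\to N^*$ and the inclusion $\int I/I^2\hookrightarrow N^*$ into a single commutative diagram and apply the snake lemma. In that diagram $T^1(\Sigma)$ appears as a kernel (the part of $N$ not reached by derivations), $N/h(N)$ as the cokernel of $h$, and $N^*/I$ as the piece of $N^*$ not realised as $I$-classes; additivity of dimension in the diagram yields the claimed formula for $\dim(I/(J_f+\int I))$.

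The second formula is then essentially the translation of the definition of $VD_{\infty}(f)$ from \cite{dJ,dJdJ}: the Euler characteristic of the two-term complex $N\xrightarrow{h}N^*$, corrected by the inclusion $\int I/I^2\hookrightarrow N^*$, gives $\dim(N/h(N))-\dim(N^*/I)+\dim(\int I/I^2)$. Here smoothability of $\Sigma$ is used to match the virtual count $VD_{\infty}(f)$, defined via a generic deformation, with this algebraic expression. The main obstacle is the bookkeeping in the snake-lemma step: one must verify that the various comparison maps between $\Theta_P\otimes\mathcal{O}_\Sigma$, $N$, $N^*$, and the subquotients of $I$ actually fit into a single commutative diagram with no higher derived corrections. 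This is where $T_2(\Sigma)=T^2(\Sigma)=0$ and the Cohen--Macaulayness of $\mathcal{O}_\Sigma$ do the real work, by forcing the vanishing of exactly the $\Ext$-groups that would otherwise obstruct the identification.
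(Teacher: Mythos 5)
First, be aware that the paper contains no proof of this statement: it is imported verbatim from \cite{pellikaan2} and \cite{dJvS1}, so there is no internal argument to compare yours with, and I can only measure your sketch against those sources. Your opening reduction is correct and is how the count is organised there: the second isomorphism theorem gives $(J_f+\int I)/J_f\cong \int I/(\int I\cap J_f)$, of dimension $c_{e,I}(f)$, so granting the second displayed formula the first one reduces to $\dim\bigl(I/(J_f+\int I)\bigr)=\dim T^1(\Sigma)+\dim(N/h(N))-\dim(N^*/I)$. Your Hessian step also closes, after two repairs. The double-duality map runs $I/\int I\hookrightarrow N^*$ (there is no map $N^*\to I/(\int I\cdot I)$ as you write); for $f=\sum a_{ij}g_ig_j$ one has $\partial_kf\equiv 2\sum_{i,j}a_{ij}(\partial_kg_i)g_j$ modulo $I^2$, so the image of $J_f$ in $I/\int I\subset N^*$ coincides with the image of $h\circ d^*$. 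Computing the length of $N^*/\mathrm{im}(h\circ d^*)$ in two ways, through the intermediate submodules $h(N)$ and $I/\int I$ of $N^*$, gives
\[\dim(N/h(N))+\dim T^1(\Sigma)=\dim(N^*/I)+\dim\bigl(I/(J_f+\int I)\bigr),\]
which is exactly what is needed; but to identify $h(N)/h(d^*(\Theta\otimes{\cal O}_\Sigma))$ with $T^1(\Sigma)=\mathrm{coker}(d^*)$ you must observe that $h$ is injective (its cokernel is finite and $N$ is torsion free, so $\ker h$ is a torsion submodule of $N$, hence zero) --- a point your snake-lemma bookkeeping glosses over. You also tacitly assume $f\in I^2$; the hypothesis is only $f\in\int I$, and when $\int I\neq I^2$ the transverse Hessian must be built as in \cite{dJvS1} rather than from a naive quadratic expression in generators of $I$.

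The genuine gap is the second formula. $VD_\infty(f)$ is not defined as the Euler characteristic of $N\stackrel{h}{\lra}N^*$ corrected by $\int I/I^2$; it is introduced in \cite{dJ}, \cite{dJdJ} by a different algebraic expression, normalised so as to count $D_\infty$-points in a generic admissible deformation, and the identity $VD_\infty(f)=\dim(N/h(N))-\dim(N^*/I)+\dim(\int I/I^2)$ is itself a nontrivial theorem of \cite{dJvS1}. That is precisely where smoothability of $\Sigma$ and the vanishing $T_2(\Sigma)=T^2(\Sigma)=0$ are actually used; your sketch dismisses this as ``essentially the translation of the definition'' and thereby assumes the hardest half of the statement. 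By contrast, these hypotheses are not really needed for the diagram chase in your first half, which only requires finite length of $\mathrm{coker}(h)$, of $T^1(\Sigma)$ and of $N^*/I$ (guaranteed by transverse type $A_1$ and the curve $\Sigma$ having an isolated singularity); attributing them to ``closing the diagrams without stray $\Ext^1$ terms'' mislocates where they enter.
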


These invariants have furthermore an interpretation in terms of deformation
theory. An {\em admissible deformation} of the pair $f, \Sigma$ over a base 
$S$ consist a
flat deformation of $\Xi \lra S$ of $\Sigma$, together with a 
deformation of the $F:(\C^n \times S,0)\lra (\C \times S,0)$, such that
$\Xi$ is contained in the critical locus of $F$. We refer to
\cite{pellikaan2} and \cite{dJvS1} for more details. There is a notion
of good represenetatives for the germs involved and such a good representative 
of a one-parameter deformation $F$, $\Xi$ is called a {\em morsification}, if 
for $t \neq 0$ the curve $\Xi_t$ is smooth and
the critical points of $F_t: X \lra \C$ are of the simplest possible type, 
namely $A_1$, $A_{\infty}$ or $D_{\infty}$. 
If the curve $\Sigma$ defined by $I$ is smoothable, and $f \in I^2$, then
morsifications do exist (\cite{pellikaan2}, prop. 3.4). But even for
functions of three variables morsifications do not alway exist, as triple
points $f=xyz$ generically occur. Allowing for these leads to the notion
of {\em disentanglement}, but even these do not always exists, as projections
of non-smoothable normal surfaces singularities to $(\C^3,0)$ show. The following  result can be found in \cite{pellikaan2}.

\begin{theorem}(Pellikaan, \cite{pellikaan2}, prop. 2.19)
If $f$ posesses a morsification, then
\[ \dim(I/J_f)=\# A_1 +\# D_{\infty}\]

where $\# A_1$ and $\# D_{\infty}$ denote the number of these singularities
appearing in a morsification.
\end{theorem}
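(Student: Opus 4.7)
The plan is to interpret the left-hand side as the rank of an $S$-free module coming from the flat deformation framework of Section 5, and the right-hand side as the sum of local contributions at the finitely many isolated critical points in a generic fibre of the morsification. The bridge between the two is the conservation of total dimension for a coherent sheaf in a flat family.

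Fix a morsification of $f$: a one-parameter admissible deformation $F\in R=\C\{x_1,\ldots,x_n,t\}$ together with a flat deformation $\Xi\to S=\C\{t\}$ of $\Sigma$, such that for $t\neq 0$ small, $\mathrm{Crit}(F_t)$ consists of the smooth curve $\Xi_t$ carrying at most $D_\infty$ degenerations, together with finitely many isolated $A_1$ points. Setting $\mathbf{F}=(\partial_1F,\ldots,\partial_nF)$, admissibility ensures $\Xi\subset V(J_{\mathbf{F}})$, so that $H_0(\mathbf{F})=R/J_{\mathbf{F}}$ is $S$-flat. Proposition~\ref{flat} then applies and yields that $M_S=I_S/J_S$ is a free $S$-module of some finite rank $r$; in particular $\dim_{\C}(I/J_f)=\dim_{\C}M_0=r$.

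It remains to identify $r$ with $\#A_1+\#D_\infty$ by computing $\dim_{\C}M_{t_0}$ for a generic $t_0$. Choose a good (Milnor-type) representative $F\colon B\times\Delta\to\C$. For $0<|t_0|\ll 1$ the coherent sheaf $\mathcal{M}_{t_0}=\mathcal{I}_{t_0}/\mathcal{J}_{t_0}$ on $B$ is $\mathfrak{m}$-torsion in each stalk by Proposition~\ref{saturation}, hence supported only at the points of $\mathrm{Crit}(F_{t_0})\cap B$ where the critical locus fails to be smooth with transverse type $A_1$, i.e.\ precisely at the isolated $A_1$ and $D_\infty$ points. Conservation of length in the flat family $\mathcal{M}\to\Delta$ then gives
\[r=\sum_{p}\dim_{\C}(\mathcal{M}_{t_0})_p,\]
the sum ranging over these points. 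The local contributions are read off from normal forms: at an $A_1$ point, $F_{t_0}=\sum y_i^2$ gives $J=\mathfrak{m}$, so $P/J=\C$ and the contribution is $1$; at a generic $A_\infty$-point on $\Xi_{t_0}$, $F_{t_0}=\sum_{i=1}^{n-1}y_i^2$ makes $J=(y_1,\ldots,y_{n-1})$ already reduced, so $I=J$ and the contribution is $0$; at a $D_\infty$-point with normal form $y_1^2y_2+\sum_{i=3}^n y_i^2$, a direct computation gives $J=(y_1y_2,y_1^2,y_3,\ldots,y_n)$, $I=(y_1,y_3,\ldots,y_n)$, and $\dim_{\C}(I/J)=1$. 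Summing, $r=\#A_1+\#D_\infty$, as claimed.

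The main obstacle is the conservation-of-number step, i.e.\ rigorously justifying $r=\sum_p\dim_{\C}(\mathcal{M}_{t_0})_p$. This requires a proper-base-change argument for the coherent sheaf $\mathcal{M}$ on $B\times\Delta$: one must ensure that for $|t_0|$ sufficiently small no support of $\mathcal{M}_{t_0}$ escapes the ball $B$, and that the $S$-freeness from Proposition~\ref{flat} indeed translates into equality of the $\C$-dimensions of the two fibres of $\mathcal{M}$ over $\Delta$. Both points are standard for the good-representative theory of admissible deformations as developed in \cite{pellikaan2}; our contribution is essentially to repackage that geometric input via the $I/J$-formalism of Section~4.
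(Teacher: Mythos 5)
Your proposal is correct and follows essentially the same route as the paper's own (sketched) proof: sheafify the relative $I_S/J_S$ construction, invoke the $S$-freeness from Proposition \ref{flat} to equate $\dim(I/J_f)$ with the sum of local contributions in a generic fibre of the morsification, and check by normal forms that $A_1$ and $D_\infty$ points each contribute $1$ while $A_\infty$ points contribute $0$. Your write-up is in fact somewhat more detailed than the paper's sketch, in particular in making the local computations and the conservation-of-number step explicit.
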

\begin{proof} (sketch) Consider a morsification over $S$.
The construction of $I/J$ in the relative case \ref{relativemodule} sheafifies 
in an obvious way to produce a sheaf ${\cal M}_S={\cal I}_S/{\cal J}_S$. Result 
\ref{flat} implies that $\pi_*({\cal M}_S)$ is a free ${\cal O}_S$-module of
finite rank, equal to $\dim(I/J)$. The freeness implies that this
number is also equal to sum of the local contributions for the function $F_t$.
A local calculation shows that both an $A_1$- and $D_{\infty}$-point give
a contribution of $1$, hence the fomula follows.
\end{proof}

In a similar vein, T. de Jong has shown that for a disentaglement of $f$ one
has 
$VD_{\infty}(f)=\# D_{\infty}-2\# T$
where $T$ denotes the triple point $f=xyz$.

\subsection{A signature theorem for functions}
We now consider a real function $f \in \R\{x_1,x_2,\ldots,x_n\}$ with jacobian
ideal $J_f$ with $\dim(V(J_f))=1$. We let $I$ be the saturation of $J_f$ and consider the jacobian module $I/J_f$.
Using the sequence ${\bf f}=f_1,f_2,\ldots,f_n$ and the isomorphism 
$\phi: P \lra \Omega^n$ given by $1 \mapsto dx_1\wedge dx_2 \wedge
\ldots dx_n$. From theorem \ref{pairing} we obtain a non-degenerate
pairing 
\[B_f: I/J_f \times I/J_f \lra \R\]

\begin{definition} The signature invariant of $f$ is 
\[ \sigma(f):=Signature(B_f) \in \Z\]
\end{definition}
This is indeed an invariant of $f$; it equals the signature of the
canonical pairing $B:M_f \times M_f \lra \R$, where 
$M_f:=H_{\bf m}^0(\Omega^n/df\wedge \Omega^{n-1})$.
Note that, in the definition with $B_f$, if we interchange two
coordinates, then $dx_1\wedge\ldots \wedge dx_n$ changes sign, but also
the order the $f_i$ is changed in a corresponding way. As a result, the
signature of $B_f$ does not change. 
\begin{example}
It is easy to verify that $\sigma$ has the following properties:\\
1) $\sigma(-f)=(-1)^n \sigma(f)$.\\
2) If $f$ has at most one-dimensional critical locus, and $g$ has an 
isolated critical point, then
\[ \sigma(f \oplus g)=\sigma(f) \cdot \sigma(g)\]
Here $\oplus$ denotes the Thom-Sebastiani sum of $f$ and $g$.
Furthermore, one computes $\sigma(x^2)=1$, hence $\sigma(-x^2)=-1$, and
so $\sigma(x^2+y^2)=\sigma(-(x^2+y^2)=1$, whereas $\sigma(x^2-y^2)=-1$.
For $x^2y \in \R[[x,y]]$ one finds 
\[\sigma(x^2y)=1\]
so that $\sigma(x^2y+z^2)=1$, $\sigma(x^2y-z^2)=-1$. Note that
$x^2y-z^2=-(x^2(-y)+z^2)$, but indeed $\sigma(-f)=-\sigma(f)$ for
a function of three variables.
\end{example}
 
\begin{theorem} \label{signatureconstant}
Let $f \in \R\{x_1,x_2,\ldots,x_n\}$ be a function with a 
one-dimensional critical locus $\Sigma$ and let $F:X \times S \lra \R \times S$ be
a good representative of an admissible deformation of $f,\sigma$. Then for
a $s \in S$ one has
\[\sigma(f)=\sum_{p \in X} \sigma(F_s,p)\]
\end{theorem}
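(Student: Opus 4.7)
The plan is to reduce Theorem \ref{signatureconstant} to the $S$-relative pairing of Theorem \ref{pairingfamily}, applied to the admissible deformation $(F,\Xi)$, and then to invoke constancy of the real signature in a flat family of non-degenerate quadratic forms.

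First, I would verify that the relative gradient sequence $\mathbf{F}=(\partial_1F,\ldots,\partial_nF)$ defines a relative almost complete intersection over $S$ in the sense of Section 5. The admissibility hypothesis guarantees that the curve $\Xi\to S$ is flat and contained in the critical locus $V(\mathbf{F})$, and that transverse type $A_1$ persists along the generic part; by Proposition \ref{saturation} this forces $\mathcal{O}_X/I_S$ to be Cohen--Macaulay and $S$-flat, whence $H_0(\mathbf{F})=\mathcal{O}_X/J_S$ is $S$-flat in a sufficiently small good representative. Proposition \ref{flat} then produces a coherent sheaf $\mathcal{M}_S=\mathcal{I}_S/\mathcal{J}_S$ whose direct image $\pi_*\mathcal{M}_S$ is a finite free $\mathcal{O}_S$-module, with fibre at any $s\in S$ equal to the $\m$-torsion of $\mathcal{O}_X/(\partial_1F_s,\ldots,\partial_nF_s)$.

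Next, I would apply Theorem \ref{pairingfamily} with the canonical isomorphism $\phi\colon \mathcal{O}_{X\times S}\lra\omega_{X\times S/S}$ given by $1\mapsto dx_1\wedge\cdots\wedge dx_n$, obtaining a non-degenerate $S$-bilinear form $B_\phi$ on $\pi_*\mathcal{M}_S$. Restricted to the real base $S$, this is a continuous family of non-degenerate real symmetric forms on a free module of constant rank. The eigenvalues depend continuously on $s$, and non-vanishing of the determinant prevents any eigenvalue from crossing zero; hence the signature is locally constant on the real points of $S$, and globally constant on a (connected) small real disc. Separately, at any real $s$ the stalk of $\pi_*\mathcal{M}_S$ decomposes as $\bigoplus_{p\in X}M_{F_s,p}$, supported at the finitely many points where $F_s$ fails transverse $A_1$-type. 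Since $B_\phi$ is $\mathcal{O}_X$-linear, an element annihilated by the maximal ideal of $p$ pairs trivially with anything supported at $q\neq p$, so this decomposition is orthogonal. Combining the two facts,
\[
\sigma(f)\;=\;\textup{Sign}(B_\phi|_0)\;=\;\textup{Sign}(B_\phi|_s)\;=\;\sum_{p\in X}\textup{Sign}(B_\phi|_{M_{F_s,p}})\;=\;\sum_{p\in X}\sigma(F_s,p).
\]

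The main obstacle is the first step: verifying that the admissibility data — flatness of $\Xi\to S$ together with $\Xi\subset\textup{crit}(F)$ and the transverse $A_1$ property — really implies the flatness hypothesis of Proposition \ref{flat} on $H_0(\mathbf{F})$ itself, rather than only on the Cohen--Macaulayfication $\mathcal{O}_X/I_S$. One must argue that the $\m$-primary torsion $\mathcal{M}_S$ contributes no new $S$-torsion. An alternative route that sidesteps this is to establish $S$-freeness of $\pi_*\mathcal{M}_S$ directly via length-additivity of the Artinian fibres under specialization, using that the local invariants $\dim_\R M_{F_s,p}$ at critical points of the family sum to the constant $\dim_\R I/J_f$ (conservation of the generalized Milnor number, in the spirit of Pellikaan's proof sketched before Theorem \ref{signatureconstant}).
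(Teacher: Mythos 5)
Your proposal follows essentially the same route as the paper: form the relative module $\mathcal{M}_S=\mathcal{I}_S/\mathcal{J}_S$, use the freeness of $\pi_*\mathcal{M}_S$ from Proposition \ref{flat}, apply Theorem \ref{pairingfamily} to get a family of non-degenerate pairings, observe that the signature of a continuously varying non-singular symmetric matrix is locally constant, and then decompose the fibre at $s\neq 0$ into orthogonal local blocks at the singular points of $F_s$. Your extra care about why admissibility yields $S$-flatness of $H_0(\mathbf{F})$ (and the fallback via length-conservation) is a reasonable elaboration of what the paper simply recalls from its sketch of Pellikaan's argument.

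There is one step you pass over that the paper explicitly flags: for real $s$ the artinian module $\pi_*(\mathcal{M}_S)_s$ is supported not only at real points $p\in X$ but also at maximal ideals corresponding to complex conjugate pairs of non-real singular points of $F_s$. Your orthogonal decomposition $\bigoplus_{p\in X}M_{F_s,p}$, read over the real points, therefore does not exhaust the fibre, and the identity $\textup{Sign}(B_\phi|_s)=\sum_{p\in X}\textup{Sign}(B_\phi|_{M_{F_s,p}})$ needs the additional observation that each conjugate pair of non-real points contributes a block of signature zero (the pairing restricted to such a pair is hyperbolic, being swapped by complex conjugation). This is a genuine omission, though easily repaired, and it is the only substantive point where your argument falls short of the paper's.
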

\begin{proof}
We denote by $\pi: X \times S \lra S$ the canonical projection and
consider as before the sheaf ${\cal M}_S={\cal I}_S/{\cal J}_S$.
Recall that $\pi_*({\cal M}_S)$ is a free ${\cal O}_S$-module of
finite rank, equal to $\dim(I/J)$. Using \ref{pairingfamily} we obtain a family of 
pairings
\[ B_S:\pi_*({\cal M}_S) \times \pi_*({\cal M}_S) \lra {\cal O}_S\]
parametrised by $s \in S$. Let $B_{S,s}$ the resulting pairing on the
fibre $\pi_*({\cal M}_S)_s$. The
function
\[Signature: S \lra \Z,\;\;s \mapsto Signature(B_{S,s})\]
is constant, as in a basis it is described as the signature of a non-singular
matrix that depends holomorphically on $s \in S$. For fixed $s \neq 0$, this
matrix appears in block diagonal form, corresponding to the singularities of
the function $F_s$. Note also that complex singularities appear in 
complex conjugate pairs, whose contribution to $\sigma$ turn out to cancel
 each other. The result follows.
\end{proof}

\begin{theorem}\label{top}
Let $f \in \R\{x_1,x_2,\ldots,x_n\}$ have a one-dimensional critical 
locus which admits a morsfication. Let $F_{\pm}$ denote the positive and
negative Milnor fibre of $f$. Then one has:
\[
2 \sigma(f)=-\widetilde{\chi}(F_{-})+(-1)^{n-1}\widetilde{\chi}(F_{+})
\]
\end{theorem}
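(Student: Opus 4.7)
The plan is to use a morsification to reduce the global identity to a sum of local identities, one at each isolated critical point of $F_s$. Pick a morsification $F: X \times S \lra \R \times S$ of $(f,\Sigma)$; for generic small $s \neq 0$, the critical set of $F_s$ consists of a smooth curve $\Xi_s$ on which $F_s$ is constant per component with transverse type $A_1$ away from finitely many $D_\infty$ points, together with finitely many isolated Morse ($A_1$) critical points off $\Xi_s$. By Theorem~\ref{signatureconstant},
\[\sigma(f) \;=\; \sum_{p \in A_1 \cup D_\infty} \sigma(F_s,p),\]
since at a generic ($A_\infty$) point of $\Xi_s$ the local $I/J$ vanishes. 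The goal is then to establish the parallel topological additivity
\[ -\tilde\chi(F_-(f)) + (-1)^{n-1}\tilde\chi(F_+(f)) \;=\; \sum_{p \in A_1 \cup D_\infty} \bigl[-\tilde\chi(F_-^{(p)}) + (-1)^{n-1}\tilde\chi(F_+^{(p)})\bigr],\]
where $F_\pm^{(p)}$ denotes the local positive/negative Milnor fibre at $p$. Once both additivities are in hand, the theorem reduces to the local identity $2\sigma(F_s,p) = -\tilde\chi(F_-^{(p)}) + (-1)^{n-1}\tilde\chi(F_+^{(p)})$ at each isolated critical point.

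For an $A_1$ point this local identity is precisely the Eisenbud-Levine/Arnold theorem from the introduction. For a $D_\infty$ point one uses the normal form $F_s \sim x^2y + Q(z_1,\ldots,z_{n-2})$ with $Q$ a Morse quadratic form. By Thom-Sebastiani-type multiplicativity one has $\sigma(D_\infty) = \sigma(x^2y)\cdot\sigma(Q)$, with $\sigma(x^2y) = 1$ by the example following Theorem~\ref{pairing} and $\sigma(Q) = \pm 1$ classical; the corresponding multiplicative formula for the Euler characteristic of a real Milnor fibre under a Thom-Sebastiani sum then reduces the verification to the explicit two-variable computation of $\tilde\chi(F_\pm(x^2y))$ in a small disc.

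The main obstacle is the topological additivity. The idea is to perform Morse theory on $F_s$ inside a fixed Milnor ball $B_\epsilon$: for small $s$ the topology of $F_\pm(f)$ coincides with that of $F_s^{-1}(\pm\eta) \cap B_\epsilon$, and as the value parameter sweeps across the critical values of $F_s$ the Euler characteristic of the level set jumps by a local Milnor-fibre difference at each isolated critical point encountered and by a transverse bundle contribution along each component of $\Xi_s$. The crucial point is that the transverse contributions cancel in the combination $(-1)^{n-1}\tilde\chi(F_+) - \tilde\chi(F_-)$: a direct parity case analysis on the signature $(p,q)$ with $p+q=n-1$ of the transverse Hessian gives $(-1)^{n-1}\tilde\chi(F_+^{\perp}) - \tilde\chi(F_-^{\perp}) = 0$ in every case, and each connected component of $\Xi_s \cap B_\epsilon$ is an arc of Euler characteristic one. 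A stratified-Morse argument handling the $D_\infty$ endpoints of these arcs and the transitions between strata then completes the decomposition.
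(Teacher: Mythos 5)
Your proposal follows the paper's own argument: both apply Theorem~\ref{signatureconstant} to a morsification so as to localise $\sigma(f)$ at the $A_1$ and $D_\infty$ points of $F_s$, match this against a cut-and-paste decomposition of $\widetilde{\chi}(F_{\pm})$ in which the transverse $A_\infty$ contributions cancel, and conclude from the local verification of the identity for each real form of $A_1$, $A_\infty$ and $D_\infty$. The paper leaves precisely these two ingredients --- the local verifications and the cut-and-paste step --- at sketch level (deferring to \cite{egodiss}), which is also where your write-up stops.
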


\begin{proof}
We apply the previous result to a (good representative of a) morsification
 $F: X \times S \lra \R \times S$. For $s \neq 0$ the singularities of the 
function $F_s$ are of typ $A_1$, $A_{\infty}$ and $D_{\infty}$. For
each case there are different real forms to consider. Furthermore, complex
singularities appear in complex conjugate pairs, whose contribution to
$\sigma$ turn out to cancel each other. The result then follows 
from the truth for these singularities, an easy cut and paste argument.
More details can be found in \cite{egodiss}.
\end{proof}

\begin{figure}
\begin{tabular}{ccc}
\epsfig{figure=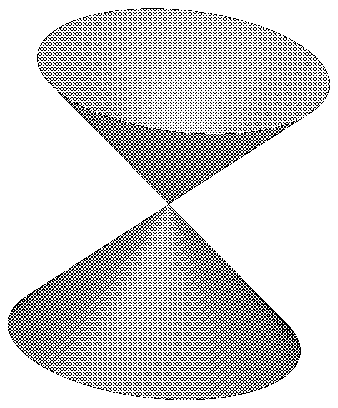}&\epsfig{figure=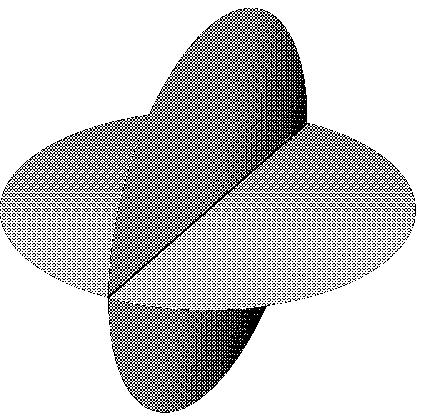}&\epsfig{figure=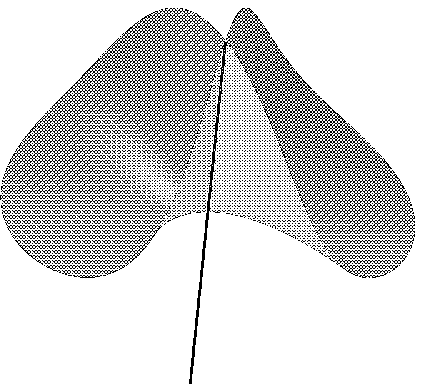}\\
$A_1$&$A_{\infty}$&$D_{\infty}$
\end{tabular}
\caption{Morse type of non-isolated singularites}
\end{figure}

\begin{remark}
We conjecture the above formula to hold for all singularities with critical
locus a curve $\Sigma$ and with transverse type $A_1$. However, the above
proof does not apply, in particular not if the curve $\Sigma$ is not smoothable.
\end{remark}

\begin{theorem}\label{threespace}
The formula of theorem \ref{top} is valid for all non-isolated hypersurface singularities in three space with one-dimensional critical locus of transverse type $A_1$:
\[2 \sigma(f)=\chi(F_{+})-\chi(F_{-})\]
\end{theorem}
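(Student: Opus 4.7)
The plan is to extend the proof of Theorem~\ref{top} to the three-variable setting, noting that for $n=3$ the right-hand side of that formula reads $-\widetilde{\chi}(F_-)+\widetilde{\chi}(F_+)=\chi(F_+)-\chi(F_-)$. Thus the present assertion is precisely the $n=3$ case of Theorem~\ref{top}, but with the hypothesis ``$f$ admits a morsification'' dropped. The goal is to reach, by an admissible deformation, a situation in which $\sigma(f)$ and $\chi(F_+)-\chi(F_-)$ both decompose as sums of local contributions that have been or can be checked one at a time.

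First I would exploit the special features of three-space. A reduced curve $\Sigma\subset(\C^3,0)$ is Cohen--Macaulay of codimension two, hence syzygetic, unobstructed ($T^2(\Sigma)=0$), and smoothable. I would construct an admissible one-parameter deformation $F\colon X\times S\lra\R\times S$ of $(f,\Sigma)$ whose generic fibre $F_s$ has critical locus consisting of (i) a smooth disjoint union of curves $\Sigma_s$ along which $F_s$ has transverse type $A_1$ except at finitely many special points realising Pellikaan's three-variable normal forms $A_\infty, D_\infty, T$ (where $T$ is the triple point $xyz$), together with (ii) finitely many isolated critical points away from $\Sigma_s$. By Theorem~\ref{signatureconstant} one has $\sigma(f)=\sum_p \sigma(F_s,p)$, and by a standard cut-and-paste argument on the real Milnor fibration the quantity $\chi(F_+)-\chi(F_-)$ decomposes in the analogous way, with pairs of complex-conjugate critical points of $F_s$ contributing zero to both sides.

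Given such a decomposition the formula reduces to a finite local verification. The transverse types $A_1, A_\infty, D_\infty$ are already covered by the proof of Theorem~\ref{top} (see \cite{egodiss}), and the isolated critical points are covered by the classical Eisenbud--Levine--Khimshiashvili theorem combined with Arnold's formula. The only genuinely new case forced by three-space is the triple point $f=xyz$, which I would dispatch by direct computation: $\sigma(xyz)$ from the explicit pairing on $I/J_f$ with $I=(xy,yz,zx)$ and $J_f=(yz,xz,xy)$, and the Euler characteristics $\chi(F_\pm)$ from the real picture of the three coordinate planes together with their $\{f=\pm\eta\}$-smoothings.

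The hard part will be to justify the existence of the extended disentanglement above, in view of the remark after Theorem~\ref{top} warning that disentanglements can fail to exist even in three variables (projections of non-smoothable normal surfaces). I would try to circumvent this by a two-step construction: first deform $\Sigma$ alone using its smoothability in $(\C^3,0)$, then build a deformation of $f$ compatible with the smoothing, absorbing any residual obstruction into isolated critical points of $f$ whose contribution is already controlled. If the local obstruction cannot be made to vanish this directly, I would fall back on a density argument on the parameter space of small real perturbations: both sides of the identity are integer-valued and additive on nearby admissible deformations, and the formula holds on the dense locus where a disentanglement exists, so it should propagate by specialization. This last step is where I expect the main technical difficulty to lie.
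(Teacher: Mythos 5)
Your reduction to local models is the right general strategy, but the existence of the ``extended disentanglement'' you build it on is precisely the gap, and you identify it without closing it. Neither fallback works: smoothing $\Sigma$ first and then trying to deform $f$ compatibly is exactly what fails for projections of non-smoothable normal surface singularities (a disentanglement of $f$ would induce a smoothing of the normalization, which is the obstruction), and the density argument has no content, since for a fixed germ $f$ there is no reason that admissible perturbations admitting a disentanglement form a dense --- or even nonempty --- subset; disentanglability is a property of the pair $(f,\Sigma)$, not something achieved by a generic small perturbation. Your local computation for the triple point $xyz$ would only be needed on a route that does not get off the ground.

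The paper's proof takes a different and shorter route that never smooths $\Sigma$ at all. Pick a generic $g\in I^2$; since a reduced space curve in $(\C^3,0)$ is smoothable and $g$ lies in $I^2$ (not merely in $\int I$), Pellikaan's existence theorem for morsifications applies to $g$, so Theorem \ref{top} holds for $g$. Then connect $f$ to $g$ by the linear path $F=tf+(1-t)g$ inside $\int I$: this is an admissible deformation in which $\Sigma$ is deformed \emph{trivially}, and the only singularities that split off are isolated $A_1$ points and $D_\infty$ points on $\Sigma$, for which the formula is known. Constancy of the signature (Theorem \ref{signatureconstant}) together with additivity of the Euler characteristics then transfers the statement from $g$ to $f$. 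If you want to repair your write-up, replace the attempted disentanglement by this interpolation to a generic element of $I^2$ with $\Sigma$ held fixed; no triple points ever appear and that computation becomes unnecessary.
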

\begin{proof}
Let $\Sigma \subset (\C^3,0)$ be the critical locus of $f$, described by a
radical ideal $I$, so $f \in \int I$ and let $g \in I^2$ be generic. 
The statement holds for $g$, because for these a morsification does exist 
and thus we can apply the previous result.
Also, there is an admissible deformation described by $F=tf+(1-t)g$ and
where the curve $\Sigma$ is deformed trivially. By \ref{signatureconstant}
we have $\sigma(g)=\sigma(f)+\sum \sigma_{i}(f_i,p_i)$, where the $(f_i,p_i$
are the germs of singularties (of type $A_1$ of $D_{\infty}$ that split off 
in this deformation. As we know the truth for these singularities, we can 
conclude the truth of the statement for $f$ itself.
\end{proof}

\begin{remark}
(i) The formula \ref{threespace} appears to holds for a much broader 
class of surface singularities with other transverse types, but it is not 
true  without further assumptions. For example, the jacobian module $I/J_f$ 
for the sextic $f=(x^2+y^2)^3-4x^2y^2z^2=0$ has $2t^5+3t^6+2t^7$ as 
Poincar\'e polynomial. So its dimension is seven, and thus
the signature in any case is odd. In fact it can be computed to be equal to 
three. In contrast, the difference of the real Euler characterisics is eight.\\
(ii) From additivity of the Euler number one has
\[ \chi(F_{+})+\chi(F_{-})+\chi(F_0)=2\]
where $\chi(L_0)$ is the Euler number of the real link 
$L_0:=S^{n-1} \cap F_0$ of the singularity. In turn, this number is equal
to 
\[\chi(L_0)=\sum_i \sigma(\Sigma_i)\]
where the sum runs over the {\em real half-branches} $\Sigma_i$ of the curve
$\Sigma$, and where $\sigma(\Sigma_i)$ denotes the $\sigma$ of the singularity
transverse to $\Sigma_i$. Formulated differently, 
\[\sum_i \sigma(\Sigma_i)=N-D\]
where $N$ denotes the nunmber of {\em naked half-branches} (transverse type
$\pm(x^2+y^2)$) and $D$ the number of {\em dressed half-branches} (transverse
type $x^2-y^2$). 
\end{remark}

\subsection{A signature theorem for projective curves}
A homogenous polynomial $f \in \R[x,y,z]$ defines a cone $X:=\{f=0\}\subset \R^3$
which is the same as a projective curve $ C \subset \P^2(\R)$. Furthermore,
if $f$ has even degree, $f$ has a well defined {\em sign} for each point in the 
complement of the curve. We put $V_+:=\{(x:y:z)\in\P^2(\R):f(x,y,z)>0\}$ and
 $V_-:=\{(x:y:z)\in \P^2(\R):f(x,y,z)<0\}$
The singularities of the cone $X$ is a finite union of lines, namely the cone over
the singularities of $C$. The curve $C$ has ordinary double points, precisely if
the the transverse type outside the origin ist $A_1$.
\begin{theorem}
Let $f\in\R[x,y,z]$ be a homogenous polynomial of even degree defining a curve
with only double points as singularities. Then
\[
\sigma_f=\chi(V_+)-\chi(V_-).
\]
\end{theorem}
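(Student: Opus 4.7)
The plan is to combine Theorem~\ref{threespace} with the conical nature of a homogeneous polynomial, turning the Milnor fibres at the origin into sphere-pieces and then descending to $\P^2(\R)$ via the antipodal map.

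First, I would verify that Theorem~\ref{threespace} applies. By Euler's identity $x\cdot\nabla f = d\cdot f$, every critical point of $f$ lies on the cone $X=\{f=0\}$, and because $C\subset\P^2(\R)$ has only ordinary double points, the critical locus of $f$ in $\R^3$ is precisely the union (with the origin) of the lines through $0$ lying over the nodes of $C$. This locus is one-dimensional, and along each such line $f$ has transverse type $A_1$. Hence Theorem~\ref{threespace} gives
\[
2\sigma(f)=\chi(F_+)-\chi(F_-),
\]
so it suffices to show $\chi(F_\pm)=2\chi(V_\pm)$.

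Second, I would use homogeneity to reduce the Milnor fibres to closed subsets of the $2$-sphere. Rescaling $x\mapsto \epsilon x$ identifies $F_+=\{\|x\|\le \epsilon,\ f(x)=\eta\}$ with $\{y\in B_1:f(y)=\delta\}$, where $\delta=\eta/\epsilon^d>0$ can be made arbitrarily small. Writing $y=ru$ with $u\in S^2$ and using $f(ru)=r^d f(u)$, the equation $f(y)=\delta$ with $\|y\|\le 1$ is equivalent to $u\in S^2$ with $f(u)\ge \delta$ and $r=(\delta/f(u))^{1/d}\in(0,1]$. This yields a diffeomorphism
\[
F_+\;\cong\;\{u\in S^2:f(u)\ge\delta\}=:F_+^\delta,
\]
and analogously $F_-\cong F_-^\delta:=\{u\in S^2:f(u)\le -\delta\}$.

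Third, I would show that for $\delta>0$ small enough the inclusions $F_\pm^\delta\hookrightarrow V_\pm^{S^2}:=\{u\in S^2:\pm f(u)>0\}$ are homotopy equivalences. On the open locus where $\nabla f|_{S^2}$ does not vanish one can use a gradient flow on $S^2$ to push $V_+^{S^2}$ inward onto $F_+^\delta$; the only delicate points are the finitely many nodes of $C_{S^2}=S^2\cap C$, where $\nabla f=0$ in $\R^3$. There the transverse $A_1$ hypothesis supplies one of the two local real models $f=x_1x_2$ or $f=x_1^2+x_2^2$, and in each case a direct inspection shows that the intersection of a small disk in $S^2$ with $\{f\ge\delta\}$ is a strong deformation retract of its intersection with $\{f>0\}$. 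Patching these local retractions with the global gradient flow gives $\chi(F_\pm)=\chi(V_\pm^{S^2})$.

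Finally, since $d$ is even, $f(-u)=f(u)$, so $V_\pm^{S^2}$ is invariant under the antipodal involution, and $S^2\to\P^2(\R)$ restricts to an unbranched double cover $V_\pm^{S^2}\to V_\pm$; hence $\chi(V_\pm^{S^2})=2\chi(V_\pm)$. Combining,
\[
2\sigma(f)=\chi(F_+)-\chi(F_-)=\chi(V_+^{S^2})-\chi(V_-^{S^2})=2\bigl(\chi(V_+)-\chi(V_-)\bigr),
\]
which is the desired identity. The main obstacle will be the third step: because $\nabla f$ vanishes at the nodes of the link, the straightforward gradient-flow deformation breaks down exactly where it is needed, and the case-by-case local model analysis at real nodes versus isolated complex-conjugate nodes is the content of the proof.
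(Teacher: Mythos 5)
Your proof is correct and follows essentially the same route as the paper: apply Theorem~\ref{threespace} and then use that the radial projection exhibits $F_{\pm}$ as an unramified $2:1$ cover over (a deformation retract of) $V_{\pm}$, giving $\chi(F_{\pm})=2\chi(V_{\pm})$. You are in fact more careful than the paper's two-line argument, which silently identifies the image of $F_{\pm}$ in $\P^2(\R)$ with all of $V_{\pm}$; your step showing $\{f\ge\delta\}\hookrightarrow\{f>0\}$ on $S^2$ is a homotopy equivalence (including the local models at the nodes) supplies exactly the detail the paper omits.
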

\begin{proof} (see also \cite{arnold}) Let $F_+$ and $F_-$ denote the positive and negativ Milnor fibre of $f$ in $\R^3$. The restriction of the canonical projection $\R^3\setminus\{0\}\rightarrow \P^2(\R)$ to the Milnor fibres $F_+\rightarrow V_+$ and $F_-\rightarrow V_-$ is an unramified $2:1$ covering. For the Euler characteristics we thus obtain
$\chi(F_+)=2\chi(V_+)$, $\chi(F_-)=2\chi(\V_-)$.
hence  the result follows from \ref{threespace}.
\end{proof}

\begin{example}
The signature of the cone over the nodal quartic defined by
$$(x^2+y^2)^2+3x^2y-y^3=0$$
 is $\sigma=-3$, we see $\chi_+=0$ and $\chi_-=3$.
The signature of the cone over the quartik with a $D_4$-singularity defined by
$$2(x^2+y^2)^2+10y(3x^2-y^2)+11(x^2+y^2)-3=0$$

\begin{figure}
\begin{center}
\epsfig{figure=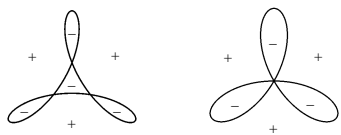}
\end{center}
\caption{\label{quad}Two Quartics}
\end{figure}
is $\sigma=-4$, the Euler numbers are $\chi_+=0$ and $\chi_-=4$.
Figure \ref{quad} shows the curves and the defined regions.
\end{example}

\begin{example}
Depending on the situation, the signature can be used to find real components of algebraic curves.
\begin{figure}
$$
\begin{array}{ccc}
\epsfig{figure=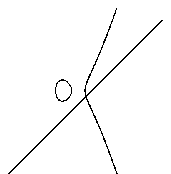}&\epsfig{figure=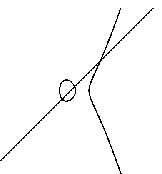}&\epsfig{figure=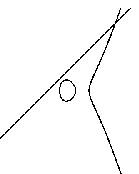}\\
\sigma=2&\sigma=0&\sigma=-2
\end{array}
$$
\caption{\label{test}
Relative position of a line and a cubic with signature of product quartic}
\end{figure}
In figure \ref{test}, the signature of the quartic, consisting of the shown cubic and a moving test line, separates the different topological positions of the line relative to the cubic.
\end{example}

\begin{example}
In figure \ref{zweiqu} 
the signature of the shown quartic separates the relative topological configuration of the two quadrics.

\begin{figure}
$$
\begin{array}{ccccc}
\epsfig{figure=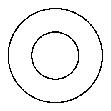}&\epsfig{figure=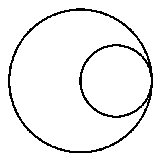,width=1.7cm}&\epsfig{figure=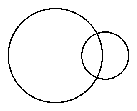}&\epsfig{figure=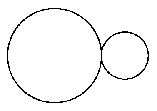}&\epsfig{figure=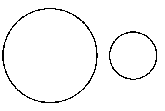}\\
\sigma=1&\sigma=0&\sigma=-1&\sigma=-2&\sigma=-3
\end{array}
$$
\caption{\label{zweiqu}
Relative position of two quadrics and signature of the product quartic}
\end{figure}
\end{example}

\section{Some open problems}

We have shown that the self-duality of the Koszul-complex leads in the almost complete intersection case to a self-duality of $I/J$. In the real case one can define a signature. For the 
special case of Jacobi-modules of hypersurface singularities with one-dimensional singular locus 
this signature was related, in special cases, to Euler-characteristics of positive and negative Milnor fibre, which can be seen as a generalisation of the theorem formulated in \cite{arnold}.\\

$\bullet$ What is the topological meaning of the signature in general? In other words, proper generalisation of the Eisenbud-Levine theorem? 
We have seen that for Jacobi-modules of a special class of 
hypersurfaces with one-dimensional singular locus there is a direct
relation with the Euler characteristic of the positive and negative Milnor fibre. But we have also seen that this relation does not hold in all examples. 
The more special question is: for exactly what class of singularities with 
one-dimensional singular locus is theorem \ref{top} is true?\\

$\bullet$ In the complete intersection case the pairing on $P/J$ factors over the multiplication map. For an almost complete intersection the non-degenerate 
pairing 
\[I/J \times I/J \lra K\]
is also $P$-linear, but there is no evident 'multiplication map' to factor over.

\begin{conjecture} In the case where $I$ is a radical ideal one has:

1) The pairing on $I/J$ factors over the multiplication map
\[I/J \times I/J \lra I^2/IJ\]

2) If $I/J \neq 0$ then The $P$-module $I^2/IJ$ has a one-dimensional socle.\\

3) The socle of $I^2/IJ$ is generated by the Jacobian determinant
\[ h:=\left|\begin{array}{ccc} \frac{\partial f_1}{\partial x_1}& \ldots &\frac{\partial f_1}{\partial x_n}\\
\ldots&\ldots&\ldots\\
\frac{\partial f_n}{\partial x_1} &\ldots& \frac{\partial f_n}{\partial x_n}\\
 \end{array}\right|\] 

\end{conjecture}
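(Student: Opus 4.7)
I treat the three parts in order, using throughout the self-duality $\Ext^n_P(I/J,P)\cong I/J$ from Proposition \ref{isoextij} and the pairing from Theorem \ref{pairing}. The core technical step is (1), from which (2) and (3) follow by more standard arguments.

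For (1), the multiplication map $\mu:I/J\otimes_P I/J\to I^2/IJ$, $a\otimes b\mapsto ab$, is well-defined (since $IJ$ absorbs products having a factor in $J$) and surjective (since $I^2$ is generated by such products). Showing that $B_\phi$ factors through $\mu$ amounts to verifying that $\sum a_i b_i \in IJ$ implies $\sum B_\phi(a_i,b_i) = 0$. I would do this by unwinding the pairing through the isomorphisms from Proposition \ref{isoextij}: Koszul self-duality gives an explicit cochain representative of the Ext isomorphism, the connecting map of Corollary \ref{koszhom}(ii) identifies $\Ext^n_P(I/J,P)$ with $I/J$ concretely, and local duality furnishes the $K$-valued pairing. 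The $P$-bilinearity of each of these isomorphisms lets one track the action of multiplication by any $r\in P$ through the chain, so that ultimately $B_\phi(a,b)$ depends on $ab$ only modulo products $cd$ with $c\in I$, $d\in J$. Writing $\sum a_i b_i = \sum c_j d_j$ with $d_j\in J$ then gives $\sum B_\phi(a_i,b_i) = \sum B_\phi(c_j,d_j) = 0$, since $d_j = 0$ in $I/J$.

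Given (1), there is a unique linear form $\beta: I^2/IJ\to K$ with $B_\phi = \beta\circ\mu$. For (2), $I^2/IJ$ is artinian (as a quotient of the finite-length module $M\otimes_P M$ with $M = I/J$), so its socle is nonzero. Let $\sigma$ generate the one-dimensional socle of the Gorenstein module $M$. Then $\sigma^2 + IJ$ lies in the socle of $I^2/IJ$: for any $r\in\m$ we have $r\sigma\in J$, hence $r\sigma^2\in JI = IJ$. I would then argue that the socle equals $K\cdot(\sigma^2 + IJ)$ by showing $\beta$ is injective on it; the adjoint $M \cong \Hom_K(M,K)$ coming from non-degeneracy, combined with the socle condition, forces any kernel element to vanish. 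For (3), the radicality of $I$ implies $h \in I^2$: at a smooth point of $V(I)$ the generators $f_i$ can be put in a normal form modulo $I^2$, and the Jacobian determinant is seen to vanish to order two along the smooth locus of the reduced curve; radicality then gives $h\in I^2$ globally. A Scheja--Storch style chain-rule calculation then shows $\m\cdot h\subset IJ$, placing $h$ in the socle of $I^2/IJ$, and (2) identifies it as a generator.

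The main obstacle is (1). In the complete intersection case the pairing factors through multiplication in the ring $P/J$ tautologically, but $I/J$ has no intrinsic ring structure, so the factorization must be extracted from the abstract Koszul/Ext description of $B_\phi$. Ensuring that the $P$-linearity survives compatibly through the entire chain of isomorphisms is the principal technical point; once this is settled, (2) and (3) reduce to essentially formal arguments. A secondary difficulty in (3) is the justification of $h\in I^2$, which uses the radicality assumption in an essential way and is false without it.
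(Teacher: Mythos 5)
You should first note that this statement is not proved in the paper at all: it appears in the closing section ``Some open problems'' as an explicit \emph{conjecture}, and the authors even remark that they do not know an algebraic proof of the symmetry of the pairing, which would already be a consequence of part (1). So there is no proof of the authors' to compare against; the only question is whether your proposal settles the conjecture, and it does not. The decisive gap is in step (1). From the $P$-bilinearity of the chain of isomorphisms you conclude that ``ultimately $B_\phi(a,b)$ depends on $ab$ only modulo $IJ$'', but $P$-bilinearity gives only $B_\phi(ra,b)=B_\phi(a,rb)$, and for a non-cyclic module this does not imply factorization through the multiplication map: it yields no relation between $B_\phi(a,b)$ and $B_\phi(a',b')$ when $ab=a'b'$ but the two pairs are not related by scalars from $P$. (In the complete intersection case the factorization is automatic precisely because $P/J$ is cyclic, so $B(a,b)=B(ab,1)$; the paper stresses that $I/J$ has ``no evident multiplication map to factor over''.) Unwinding the isomorphisms of \ref{isoextij} only re-establishes $P$-bilinearity; it does not produce the claimed dependence on the product alone. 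This is exactly the open point of the conjecture, and your sketch does not supply the missing idea.

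A second, structural objection: your arguments for (1) and (2) nowhere use the hypothesis that $I$ is radical, yet the paper's own sextic counterexample $f=(x^2+y^2)^3-4x^2y^2z^2$ shows these statements fail for non-radical $I$. There $I/J_f$ has Poincar\'e polynomial $2t^5+3t^6+2t^7$, so the graded pairing takes values in degree $12$, while $I^2/IJ$ has Poincar\'e polynomial $2t^{10}+2t^{11}$ and vanishes in degree $12$; a pairing factoring through multiplication would therefore be identically zero, contradicting non-degeneracy. Any correct proof must invoke radicality already in step (1), not only in the verification that $h\in I^2$. The remaining steps are also only gestured at: injectivity of $\beta$ on the socle of $I^2/IJ$ does not follow from non-degeneracy of $B_\phi$ on $I/J$ (nothing identifies $I^2/IJ$ as self-dual), and the claims $h\in I^2$ and $\m\cdot h\subset IJ$ are asserted rather than proved.
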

We note that for radical ideals $I$ in three variables one always has
$\dim(I/J)=\dim(I^2/IJ)$, by \cite{dJvS1}
For non-radical ideals $I$ the statements of the conjecture often hold, 
but again the function $f=(x^2+y^2)^3-x^2y^2z^2$ provides a counterexample.
Here $I^2/IJ$ has Poincar\'e polynomial $2t^{10}+2t^{11}$, however, the
Hessian $h$ sits in degree $12$. The series $(x^3+y^3)^p-x^py^pz^p$ provides
further examples.
 
The pairing on $I/J$ is {\em symmetric}, but we do not know a good algebraic proof of this fact. Of course it would be explained by the above conjecture.\\

$\bullet$ Is there a way to exhibit a self-dual resolution resolution of $I/J$ as $P$-module?\\
If 
\[0\lra F_{n-1} \lra \ldots \lra F_1 \lra P \lra P/I\]
is a free resolution over $P$, the map $P/J \lra P/I$ can be covered by a map of
the Koszul-complex $\wedge^{\bullet}$ to $F_{\bullet}$ giving a diagram
\[
\begin{array}{cccccccccc}
\wedge^{n-1} &\lra&  \wedge^{n-2}& \lra &\ldots &\wedge^1&\lra &P& \lra &P/J\\
\downarrow&&\downarrow&&&\downarrow&&\downarrow&&\downarrow\\
F_{n-1}& \lra&F_{n-2}& \ldots& \lra& F_1& \lra& P& \lra & P/I\\
\end{array}
\]
Applying $Hom_P(-,P)$ to the diagram, and using $Hom(\wedge^p,P)=\wedge^{n-p}$
we obtain also maps
\[
\begin{array}{ccccccccc}
F_{1}^*& \lra&F_{2}^*&\lra& \ldots& \lra&F_{n-2}^*&\lra & F_{n-1}^*\\
\downarrow&&\downarrow&&&&\downarrow&&\downarrow\\
\wedge^{n-1} &\lra&  \wedge^{n-2}& \lra &\ldots&\lra& \wedge^2&\lra&\wedge^1\\
\end{array}
\]

These two diagrams can be put on top of each other: 
\[
\begin{array}{ccccccccc}
F_{1}^*& \lra&F_{2}^*&\lra& \ldots& \lra&F_{n-2}^*&\lra & F_{n-1}^*\\
\downarrow&&\downarrow&&&&\downarrow&&\downarrow\\
\wedge^{n-1} &\lra&  \wedge^{n-2}& \lra &\ldots&\lra& \wedge^2&\lra&\wedge^1\\
\downarrow&&\downarrow&&&&\downarrow&&\downarrow\\
F_{n-1}& \lra&F_{n-2}&\lra& \ldots& \lra&F_{2}&\lra & F_{1}\\
\end{array}
\]
If this were a double complex, the total complex would be a self-dual resolution
of $I/J$ of the right length, see \cite{pellikaan3}.
Unfortunately, is is not clear that the maps always can be chosen as to obtain a 
double complex.

$\bullet$ Furthermore, one might ask for generalisations going further than almost complete intersections
and look for self-dual pieces in the Koszul-homology. It is not clear how such a 
theorem might look like: there are simple examples of sequences
$f_1,\ldots,f_n$ in $P$ with $\dim(P/J)=2$, but for which $H^0_{\m}(P/J)$ is not a Gorenstein module. The homogeneous function
$f=(x+y)(xu)^2+(u+v)(xv)^2+(x+u+v)(yu)^2+(y+u)(yv)^2$ is singular along the union of two planes defined by the ideal $(x,y) \cap (u,v)$, so the locus defined by $J=J_f$ is two-dimensional. The module $H^0_{\m}(P/J)$ is computed to have
\[6t^4+13t^5+15t^6+9t^7\]
as Poincar\'e series, so it is not Gorenstein.\\

{\bf Acknowledgement:} The first author wants to thank D. Eisenbud for showing interest in the pairing in an early stage of this work and asked the question as to the meaning of the signature in the real case. The work was part of the Ph. D. thesis of the second author. The authors thank further R. Pellikaan exchange of ideas and T. de Jong for suggesting the argument used in \ref{threespace}.


\bibliographystyle{apalike}
\bibliography{math}
\end{document}